\newcounter{count}
\newcommand{\num}{\stepcounter{count}\the\value{count}}
\newtheorem{definition}{Definition}
\newtheorem{theorem}{Theorem}
\newtheorem{lemma}{Lemma}
\newtheorem{corollary}{Corollary}
\theoremstyle{definition}
\newtheorem*{remark*}{Remark}
\DeclareMathOperator*{\argmax}{arg~max}
\begin{document}
\title{
Optimal Mechanism for Randomized Responses under Universally Composable Security Measure
\thanks{
MH was supported in part by 
Kayamori Foundation of Informational Science Advancement
and a JSPS Grant-in-Aid for Scientific Research 
(A) No.17H01280, (B) No.16KT0017. 
MHY was supported by National Natural ScienceFoundation of China (Grants No.~11405093), 
Natural Science Foundation of Guangdong Province (2017B030308003), 
the Guangdong Innovative and Entrepreneurial Research Team Program (Grant No.~2016ZT06D348), 
and the Science Technology and Innovation Commission of Shenzhen Municipality (Grants No.~ZDSYS20170303165926217 and No.~JCYJ20170412152620376).
}
}

\author{
\IEEEauthorblockN{Yuuya Yoshida$^{a}$, Man-Hong Yung$^{b,c,d}$, and Masahito Hayashi$^{a,b}$}\\
\IEEEauthorblockA{
$^a$Graduate School of Mathematics, Nagoya University\\
$^b$Shenzhen Institute for Quantum Science and Engineering, Southern University of Science and Technology\\
$^c$Department of Physics, Southern University of Science and Technology\\
$^d$Shenzhen Key Laboratory of Quantum Science and Engineering\\
Email: {m17043e@math.nagoya-u.ac.jp,  yung@sustc.edu.cn,  masahito@math.nagoya-u.ac.jp}
}
}

\maketitle
\begin{abstract}
We consider a problem of analyzing a global property of private data 
through randomized responses subject to a certain rule,
where private data are used for another cryptographic protocol, e.g., authentication.
For this problem, the security of private data was evaluated by a universally composable security measure, 
which can be regarded as $(0,\delta)$-differential privacy.
Here we focus on the trade-off between the global accuracy and a universally composable security measure,
and derive an optimal solution to the trade-off problem. 
More precisely, we adopt the Fisher information of a certain distribution family as the estimation accuracy of a global property 
and impose $(0,\delta)$-differential privacy on a randomization mechanism protecting private data. 
Finally, we maximize the Fisher information under the $(0,\delta)$-differential privacy constraint 
and obtain an optimal mechanism explicitly.
\end{abstract}

\begin{IEEEkeywords}
universally composable security measure,
$(0,\delta)$-differential privacy,
$l^1$-norm,
Fisher information,
parameter estimation,
sublinear function
\end{IEEEkeywords}

\section{Introduction}\label{S1}
For many applications, it is of great interest in estimating a global property of an ensemble while protecting individual privacy. 
In these scenarios, disclosed data are randomized to protect individual privacy, but it causes uncertainty to the global property. 
This fact implies a trade-off between global accuracy and individual privacy. 
This paper focuses on a scenario of answering YES/NO question, 
where the goal is to estimate the number of YES's (labeled as ``1'') or NO's (labeled as ``0'') with high accuracy, 
given that respondents randomize their responses to protect individual privacy.

More specifically, we assume that an investigator is interested only in the ratio $1-\theta:\theta$ of the binary private data, 
where $\theta$ is a real number between zero and one. 
The investigator randomly chooses $n$ individuals to ask for their private data. 
If selected individuals directly sent their private data $X\in\mathcal{X}:=\{0,1\}$ to the investigator, 
then their private data could be completely leaked to the investigator. 
To protect individual privacy, we use the following scheme \cite{Warner,Greenberg}: 
when private data is $X=0$, the individual generates a disclosed data $Y \in \mathcal{Y}$ 
subject to a distribution $p_0$ on a probability space $\mathcal{Y}$ and sends it to the investigator.
In the following, the sets $\mathcal{X}$ and $\mathcal{Y}$ are called the private data set and the disclosed data set, 
and $|\mathcal{S}|$ denotes the number of elements of a set $\mathcal{S}$.
When private data is $X=1$, the individual generates a disclosed data $Y$ 
subject to another distribution $p_1$ on $\mathcal{Y}$ and sends it to the investigator.

To analyze estimation of the ratio $1-\theta : \theta$, 
we introduce the parametrized distribution $p_\theta$ defined as 
$p_\theta = (1-\theta)p_0+ \theta p_1$.
In this way, the estimation of the ratio $1-\theta : \theta$ is reduced to 
the estimation of the parameter $\theta \in [0,1]$ of the distribution family $\{p_\theta\}_{ \theta \in [0,1]}$ 
when $n$ data are generated from the same unknown distribution $p_\theta$ independently, 
as we allow duplication in the selection of individuals. 
In literatures \cite{Cam1,Cam2,Vaart} of statistical parameter estimation, 
it is well-known that an optimal estimator is given as the maximum likelihood estimator (MLE) 
with respect to sufficiently large $n$ data.
The error of the MLE is asymptotically characterized by the inverse of the Fisher information $J_\theta$, 
which is called the Cram\'{e}r-Rao bound. 
Thus we adopt the Fisher information $J_\theta$ as the estimation accuracy.

In addition to the above scenario, 
it is natural to use private data as resources for another cryptographic protocol like an authentication protocol \cite{KR94}. 
In fact, we often use private data, e.g., birthday, to identify an individual.
In this case, we need to guarantee the security of the whole protocol.
That is, if a part of information of private data is leaked,
we need to consider its effect to the cryptographic protocol that uses private data. 
In the cryptography community, to evaluate the security of the whole protocol, 
a security measure based on the $l^1$-norm is proposed as a \textit{universally composable (UC)} security measure \cite{Cannett}.
If the UC-security measure of the first protocol equals $\delta$ 
and that of the second protocol equals $\delta'$,
then that of the combined protocol is upper bounded by $\delta+\delta'$. 
This property is called \textit{universal composability}. 
Thanks to this property, the $l^1$-norm is widely accepted as a security measure
in the communities of cryptography and information-theoretic security \cite{MW,RW,Hayashi}.
Thus, when using private data for a cryptographic protocol,
we need to guarantee that the UC-security measure is upper bonded by a certain threshold. 
On the other hand, as a privacy measure, Kairouz et al.\ \cite{KOV16} focused on $(\epsilon,0)$-differential privacy 
and maximized the Fisher information $J_\theta$ under the $(\epsilon,0)$-differential privacy constraint.
\textit{Differential privacy (DP)} is a standard privacy measure 
that is widely accepted and introduced by \cite{DMNS} and \cite{Dwork}. 
(For the definition, see \eqref{DP} in Section~\ref{S2}.)
However, $(\epsilon,0)$-differential privacy does not have the universally composable property 
for private data.
Therefore, we need to address the trade-off between the Fisher information and the UC-security measure.

In our setting, the UC-security measure is given as the variational distance 
$d_1(p_0,p_1):=(1/2)\|p_0-p_1\|_1$ between two distributions $p_0$ and $p_1$,
where $\|\cdot\|_1$ denotes the $l^1$-norm. 
The variational distance also has the following meaning: 
if an adversary tries to distinguish private date, the minimum value of the average error probability equals 
$\min_{S\subset\mathcal{Y}}\{ (1/2)p_0(S) + (1/2)p_1(S^c) \} = (1/2)(1-d_1(p_0,p_1))$, 
where $S^c$ denotes the complement of $S$. 
Fortunately, it can be regarded as $(0,\delta)$-differential privacy. 
Thus we can also say that 
we maximize the Fisher information $J_\theta$ under the $(0,\delta)$-differential privacy constraint.

Further, to address the above maximization, we encounter a new aspect 
that never appeared in preceding studies 
for the trade-off between the Fisher information and $(\epsilon,0)$-differential privacy.
Kairouz et al.\ \cite{KOV16} maximized (non-explicitly) the Fisher information under the $(\epsilon,0)$-differential privacy constraint 
when $|\mathcal{X}|,|\mathcal{Y}|\ge2$.
Then they showed that the maximization under the $(\epsilon,0)$-differential privacy constraint 
achieves the maximum value even when $|\mathcal{X}|=|\mathcal{Y}|$.
Holohan et al.\ \cite{HLM} considered the maximization under the $(\epsilon,\delta)$-differential privacy 
constraint\footnote{Although $(\epsilon,\delta)$-differential privacy prevents \textit{blatant non-privacy} \cite{De}, 
blatant non-privacy is related to the privacy of data sequences. 
This relation is out of our focus because our main interest is individual privacy.}. 
However, they assumed $|\mathcal{X}|=|\mathcal{Y}|=2$. 
Therefore, this kind of maximization has been open 
for a general disclosed data set $\mathcal{Y}$ even if $|\mathcal{X}|=2$. 
To find an optimal mechanism in our framework,
we need to maximize the Fisher information for a general disclosed data set $\mathcal{Y}$.
In fact, we can show that the maximization under the $(0,\delta)$-differential privacy constraint 
achieves the maximum value only when $|\mathcal{Y}|>|\mathcal{X}|$. 
As a result, we obtain a randomized response scheme with 
$|\mathcal{X}|=2$ and $|\mathcal{Y}|=3$
that has been never obtained in preceding studies.
Our optimal solution is completely different from those of \cite{KOV16} and \cite{HLM}.
To handle the case with $|\mathcal{Y}|>|\mathcal{X}|$,
we need to address an additional case that is more 
complicated than the case with $|\mathcal{X}|=|\mathcal{Y}|$.
Table~\ref{T2} summarizes the relation among \cite{KOV16}, \cite{HLM} and this paper.

\begin{table}[t]
\caption{Relation with preceding studies}\label{T2}
This table shows 
differential privacy constraints and 
conditions for private data sets $\mathcal{X}$ and disclosed data sets $\mathcal{Y}$.
\begin{center}
\begin{tabular}{|c||c|c|c|c|}
	\hline
	   &\multirow{2}{*}{Constraint}&\multirow{2}{*}{$|\mathcal{X}|$}&\multirow{2}{*}{$|\mathcal{Y}|$}&Condition\\
	   &   &   &   &for optimality\\
	\hline\hline
	Kairouz et al.\ \cite{KOV16}&$(\epsilon,0)$-DP&$\ge2$&$\ge2$&$|\mathcal{X}|=|\mathcal{Y}|$\\
	\hline
	Holohan et al.\ \cite{HLM}&$(\epsilon,\delta)$-DP&$2$&$2$&   \\
	\hline
	This paper&$(0,\delta)$-DP&$2$&$\ge2$&$|\mathcal{Y}|\ge3$\\
	\hline
\end{tabular}
\end{center}
\end{table}

Since the $l^1$-norm represents the minimum value of the average error probability in distinguishing private data,
it is natural to consider the minimum value of the weighted error probability 
$\min_{S\subset \mathcal{Y}}\{ (1-w)p_0(S)+w p_1(S^c) \}$ with an arbitrary weight $w\in(0,1)$.
In this case, it equals $(1/2)(1- \| (1-w)p_0-w p_1 \|_1)$.
Hence we consider the extended trade-off including $\| (1-w)p_0-w p_1 \|_1$ instead of $d_1(p_0,p_1)$. 
Furthermore, this paper also addresses the following scenario: 
when the true parameter is known to be $\theta_1$ or $\theta_2$,
our problem reduces to the discrimination between two distributions $p_{\theta_1}$ and $p_{\theta_2}$.
When we use an optimal testing method, 
the error probability goes to zero exponentially as the number of observations $n$ tends to infinity.
Dependently on our setting,
the optimal exponentially decreasing rate is known as the Chernoff bound or Stein's lemma \cite{Chernoff,Cover}.
Hence we optimize these exponents under the same constraint.
To do the above optimizations, 
we optimize a more general objective function, which is the sum of values of a \textit{sublinear function}, in Section~\ref{S3}. 
Although this objective function was optimized in \cite{KOV16} under the $(\epsilon,0)$-differential privacy constraint, 
we optimize it under the $(0,\delta)$-differential privacy constraint. 

The remaining part of this paper is organized as follows. 
Section~\ref{S2} states the formulation of our problem and the maximum Fisher information under the $l^1$-norm constraint, 
which describes that the maximum Fisher information depends on the weight $w$ and the parameter $\theta$.
Section~\ref{S3} proves our theorem on the maximum Fisher information
by solving an optimization with a general sublinear function.
Section~\ref{S4} shows that any optimal pair of two distributions $p_0$ and $p_1$ depends on the weight $w$.
By applying a general result in Section~\ref{S3},
Section~\ref{S5} discusses the maximization of the exponentially decreasing rate of the error probability.
Section~\ref{S6} explains the relation among preceding studies and this paper.
Section~\ref{S7} devotes concluding remarks.

\section{Optimal estimation}\label{S2}
According to Fig.~\ref{F1}, we describe a scheme to estimate the parameter $\theta$.
In our scheme, we first fix two distributions $p_0$ and $p_1$ on a finite probability space $\mathcal{Y}$,
which describes a conversion rule from private data $X_i\in\mathcal{X}=\{0,1\}$ to disclosed data $Y_i\in\mathcal{Y}$.  
Assume that private data $X_1, \ldots, X_n \in \mathcal{X}$ are independent and 
subject to the binary distribution parametrized by the parameter $\theta \in (0,1)$. 
That is, the true probability of $X_i=0$ is $1-\theta$ and that of $X_i=1$ is $\theta$,
where $X_i$ denotes the $i$-th individual's private data.
The $i$-th individual generates a disclosed data $Y_i$ subject to $p_x$ dependent on the value $X_i=x$ 
and then sends $Y_i$ to the investigator.
From the investigator's viewpoint, 
the disclosed data $Y_i$ is given by the distribution $p_\theta := (1-\theta)p_0+ \theta p_1$.

Next, the parameter $\theta$ is estimated by the investigator in the following way. 
The investigator observes $n$ disclosed data $Y_1, \ldots, Y_n$ 
and employs the MLE 
$\hat{\theta}_n:= \argmax_{\theta\in[0,1]} \sum_{i=1}^n \ln p_\theta(Y_i)$, 
whose asymptotic optimality is well-known \cite{Cam1,Cam2,Vaart}. 
That is, if $n$ is sufficiently large, 
the MLE $\hat{\theta}_n$ behaves approximately as 
$\theta+ (n J_\theta)^{-1/2} Z$, where
$Z$ is a random variable subject to the standard Gaussian distribution and 
$J_\theta$ is the Fisher information of the distribution family $\{p_\theta\}_{\theta\in[0,1]}$: 
\[
J_\theta = \sum_{y\in\mathcal{Y}} \Bigl( \frac{d}{d\theta}\ln p_\theta(y) \Bigr)^2 p_\theta(y).
\]
Therefore, the mean square error behaves as $1/n J_\theta$.
For example, when we require the confidence level to be $\alpha$,
the confidence interval is approximately given as 
\[
\Bigl[ \hat{\theta}_n + \frac{1}{\sqrt{n J_\theta}} \Phi^{-1}(\alpha/2),\ 
\hat{\theta}_n + \frac{1}{\sqrt{n J_\theta}} \Phi^{-1}(1 - \alpha/2) \Bigr],
\]
where
$\Phi(y) := (2\pi)^{-1/2}\int_{-\infty}^y e^{-x^2/2}\,dx$.
In this sense, we can conclude that the Fisher information $J_\theta$ is the estimation accuracy.

\begin{figure}[t]
	\centering
	\includegraphics[scale=0.45]{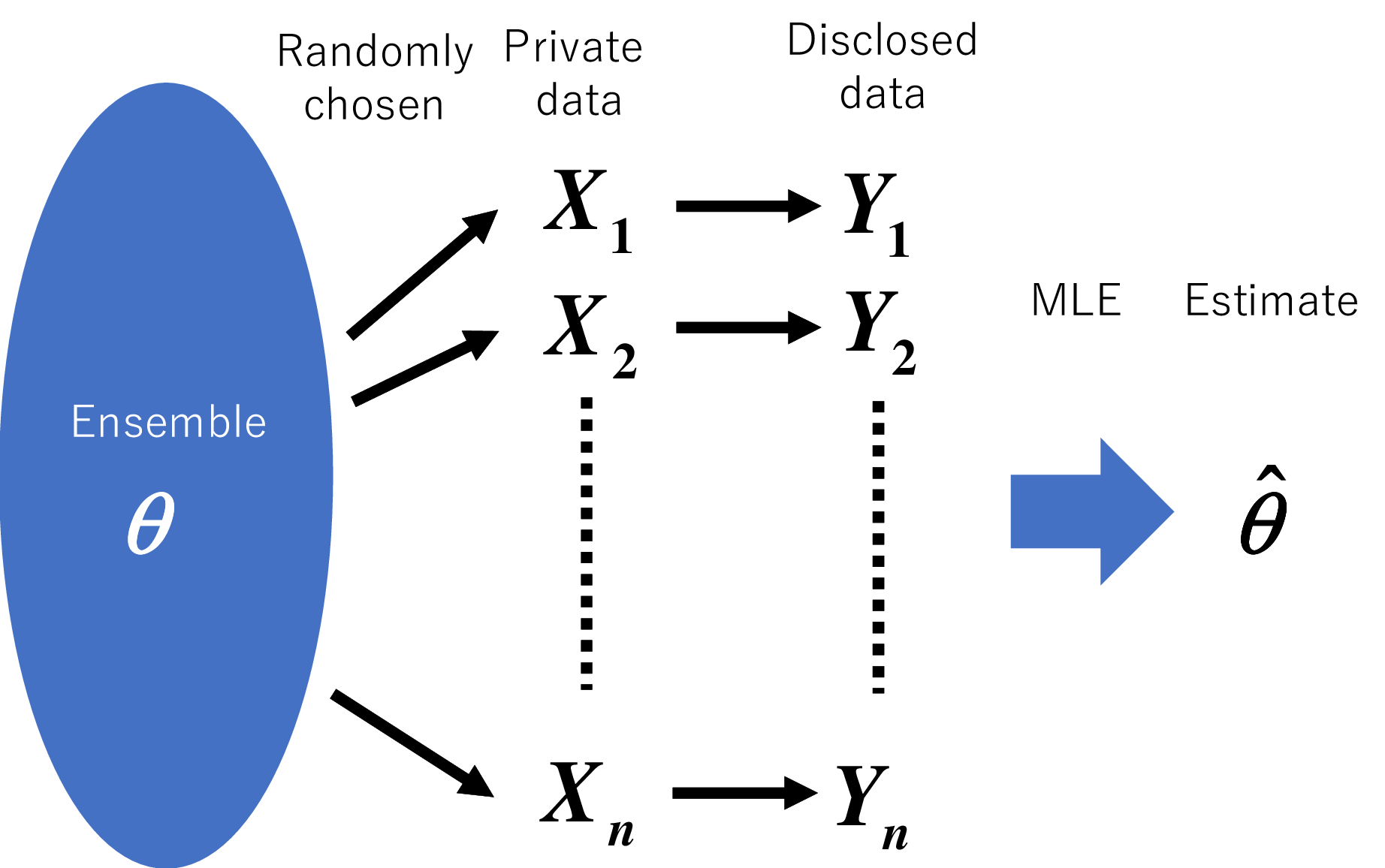}
	\caption{Estimation process of parameter $\theta$.}
	\label{F1}
\end{figure}

If the investigator only observes the $i$-th disclosed data $Y_i$, 
the investigator can also infer, at least partially, the $i$-th private data $X_i$.
Since we assume to use the private data $X_i$ in another cryptographic protocol,
the UC-security measure $\| p_0-p_1 \|_1$ is suitable for a privacy criterion.
The UC-security measure characterizes the distinguishability as follows.
The minimum value of the weighted error probability of the above inference with a weight $w \in (0,1)$ is 
$\min_{S\subset\mathcal{Y}}\{ (1-w)p_0(S)+w p_1(S^c) \}$,
where $S$ is a subset of events to infer $X_i=1$.
When treating two error probabilities $p_0(S)$ and $p_1(S^c)$ equally, 
the weight $w$ is one half.
When taking the error probability $p_0(S)$ more seriously than $p_1(S^c)$,
the weight $w$ is smaller than one half.
Now, the minimum value of the weighted error probability is calculated as
\begin{equation*}
	\min_{S\subset \mathcal{Y}}\{ (1-w)p_0(S)+w p_1(S^c) \}
	= \frac{1}{2}(1- \| (1-w)p_0-w p_1 \|_1),
\end{equation*}
which quantifies the distinguishability. 
Thus, to keep individual privacy a given level, 
we impose a constant constraint on the $l^1$-norm 
as $\| (1-w)p_0-w p_1 \|_1\le\delta$,
which is equivalent to the condition 
\begin{equation}
	\min_{S\subset \mathcal{Y}}\{ (1-w)p_0(S)+w p_1(S^c) \} \ge a := \frac{1-\delta}{2}. \label{KKO}
\end{equation}

To protect individual privacy, 
many preceding studies consider \textit{$(\epsilon,\delta)$-differential privacy}, 
which is defined by the following condition: for any $S\subset\mathcal{Y}$ and any $i,j\in\mathcal{X}$, 
\begin{equation}
	p_i(S) \le e^\epsilon p_j(S) + \delta. \label{DP}
\end{equation}
When $\epsilon=0$, the above condition can be simplified to $(1/2)\|p_0-p_1\|\le\delta$. 
Thus we can also say that the $l^1$-norm constraint is $(0,\delta)$-differential privacy when $w=1/2$.
As explained in the previous paragraph, we also address the minimum value of the weighted error probability as a privacy criterion.
Hence we maximize the Fisher information $J_\theta$ under the $l^1$-norm constraint $\|(1-w)p_0 - w p_1\|_1\le\delta$.
In the following, we denote the Fisher information by $J_\theta(p_0,p_1)$.

From now on, we shall discuss only the case $\delta\in(0,1)$ because of the following reasons: 
the triangle inequality yields $\|(1-w)p_0 - w p_1\|_1 \le (1-w) + w = 1$, 
which implies the inequality $\|(1-w)p_0 - w p_1\|_1 \le \delta$ whenever $\delta\ge1$; 
the condition $\delta\ge1$ implies that 
the domain of the maximization problem is the set of all pairs of two distributions, 
which means that there is no constraint; 
the condition $\delta=0$ implies that 
the domain is either the empty set or the set of all pairs $(p_0,p_1)$ of two distributions with $p_0=p_1$, 
which means that the Fisher information $J_\theta(p_0,p_1)$ vanishes identically. 
Further, since the triangle inequality yields $|1-2w| = |(1-w) - w| \le \| (1-w)p_0-w p_1 \|_1$,
the constraint $\| (1-w)p_0-w p_1 \|_1 \le \delta$
implies $|1-2w| \le \delta$, which is equivalent to 
the condition $a = (1-\delta)/2 \le w \le (1+\delta)/2 = 1-a$.
Hence we also impose $w\in[a,1-a]$ on $w\in(0,1)$. 
As the trade-off between the estimation accuracy $J_\theta(p_0,p_1)$ and the individual privacy $\| (1-w)p_0-w p_1 \|_1$, 
we have the following theorem:

\begin{theorem}\label{c-var}
	Assume the conditions $w\in[a,1-a]$ and $\delta,\theta\in(0,1)$, 
	and set the parameters $a := (1-\delta)/2$ and $\theta_0:= (w-a)/\delta$.
	Then we have three cases: 
	(i) $|\mathcal{Y}|=2$ and $\theta\le\theta_0$,
	(ii) $|\mathcal{Y}|=2$ and $\theta>\theta_0$,
	and (iii) $|\mathcal{Y}|\ge3$.
	In these three cases, from top to bottom, we have 
	\begin{align*}
		&\max_{\|(1-w)p_0 - w p_1\|_1\le\delta} J_\theta(p_0,p_1)\\
		=&
		\begin{dcases}
			\frac{w-a}{\theta\{ w(1-\theta) + a\theta \}},\\
			\frac{1-w-a}{(1-\theta)\{ a(1-\theta) + (1-w)\theta \}},\\
			\frac{1}{\theta(1-\theta)}\Bigl( 1 - \frac{a}{w(1-\theta) + (1-w)\theta} \Bigr).
		\end{dcases}
	\end{align*}
	In these three cases, from top to bottom,
	distributions $p_0$ and $p_1$ to achieve the maximum value 
	are given as 
	\begin{equation*}
		p_0 = 
		\begin{dcases}
			[1,0],\\
			\Bigl[ \frac{a}{1-w}, 1-\frac{a}{1-w} \Bigr],\\
			\Bigl[ \frac{a}{1-w}, 1-\frac{a}{1-w}, 0 \Bigr],
		\end{dcases}
		\quad p_1 = 
		\begin{dcases}
			\Bigl[ \frac{a}{w}, 1-\frac{a}{w} \Bigr],\\
			[1,0],\\
			\Bigl[ \frac{a}{w}, 0, 1-\frac{a}{w} \Bigr].
		\end{dcases}
	\end{equation*}
\end{theorem}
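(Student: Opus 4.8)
The plan is to rewrite $J_\theta$ as a sum, over outcomes, of one fixed sublinear function, use convexity to push the maximum to an extreme point, normalize extremal solutions down to at most three outcomes, and then solve the resulting low-dimensional problem; the three cases of the theorem will reflect whether three outcomes fit into $\mathcal{Y}$. First I would compute the Fisher information explicitly: since $p_\theta=(1-\theta)p_0+\theta p_1$ gives $\frac{d}{d\theta}p_\theta(y)=p_1(y)-p_0(y)$, we have $J_\theta(p_0,p_1)=\sum_{y\in\mathcal{Y}}\frac{(p_0(y)-p_1(y))^2}{(1-\theta)p_0(y)+\theta p_1(y)}=\sum_{y\in\mathcal{Y}}f(p_0(y),p_1(y))$, where $f(s,t):=\frac{(s-t)^2}{(1-\theta)s+\theta t}$ on the nonnegative quadrant. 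Here $f$ is convex, being the convex function $(x,u)\mapsto x^2/u$ precomposed with the affine map $(s,t)\mapsto(s-t,(1-\theta)s+\theta t)$ whose second coordinate stays positive on the quadrant; being also positively homogeneous of degree one, $f$ is subadditive. Thus $J_\theta$ is convex on the feasible polytope $\{(p_0,p_1):p_0,p_1\text{ distributions on }\mathcal{Y},\ \|(1-w)p_0-wp_1\|_1\le\delta\}$, so its maximum is attained at an extreme point; at this level the statement is an instance of the general sublinear-objective optimization carried out in Section~\ref{S3}.

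For $|\mathcal{Y}|\ge3$, the key step is to normalize an extremal solution. Call an outcome $y$ \emph{balanced} if $(1-w)p_0(y)=wp_1(y)$, \emph{$0$-type} if $p_1(y)=0$, and \emph{$1$-type} if $p_0(y)=0$. Using subadditivity of $f$, any outcome with $(1-w)p_0(y)>wp_1(y)$ and $p_1(y)>0$ may be split into a $0$-type outcome carrying $p_0$-mass $p_0(y)-\frac{w}{1-w}p_1(y)$ and a balanced outcome carrying the rest; this leaves both marginals unchanged and, because the two new signed masses $(1-w)p_0-wp_1$ share the same (nonnegative) sign, leaves $\|(1-w)p_0-wp_1\|_1$ unchanged, while not decreasing $J_\theta$. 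The mirror split applies when $(1-w)p_0(y)<wp_1(y)$ and $p_0(y)>0$. Conversely, outcomes of a common type lie on a single ray, so merging them preserves $J_\theta$ and the $l^1$-value. Iterating, the maximum is attained by a triple of one $0$-type, one $1$-type and one balanced outcome. Writing $\sigma_b$ for the balanced outcome's $p_0$-mass, the probability constraints force the $0$-type $p_0$-mass to equal $1-\sigma_b$ and the $1$-type $p_1$-mass to equal $1-\frac{1-w}{w}\sigma_b$, so $J_\theta$ is affine in $\sigma_b$ and $\|(1-w)p_0-wp_1\|_1=1-2(1-w)\sigma_b\le\delta$ reads $\sigma_b\ge a/(1-w)$. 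A computation shows the coefficient of $\sigma_b$ equals $-\frac{1-w}{m\theta(1-\theta)}$ with $m:=w(1-\theta)+(1-w)\theta$ --- this rests on the identity $m(1-m)-(2w-1)^2\theta(1-\theta)=w(1-w)$ --- hence is negative, so $\sigma_b=a/(1-w)$, yielding precisely the distributions and value of case (iii).

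For $|\mathcal{Y}|=2$ the split above is blocked, as three distinct types cannot sit inside two symbols, so I would solve the two-dimensional problem directly. With $p_0=[p,1-p]$ and $p_1=[q,1-q]$, the two signed masses are $c:=(1-w)p-wq$ and $(1-2w)-c$; since $w\in[a,1-a]$ makes $|1-2w|\le\delta$, a short case analysis shows $\|(1-w)p_0-wp_1\|_1\le\delta$ is equivalent to $a-w\le(1-w)p-wq\le1-w-a$. The feasible region is then a polygon in $(p,q)$, on which $J_\theta$ is convex; enumerating its vertices shows the only ones with $J_\theta>0$ are, up to relabeling of $\mathcal{Y}$, $(p,q)=(1,a/w)$ and $(p,q)=(a/(1-w),1)$, with respective values $\frac{w-a}{\theta\{w(1-\theta)+a\theta\}}$ and $\frac{1-w-a}{(1-\theta)\{a(1-\theta)+(1-w)\theta\}}$. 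After clearing positive denominators, the difference of these two values equals $a\big((w-a)-\theta\delta\big)$, so the first dominates exactly when $\theta\le(w-a)/\delta=\theta_0$, giving cases (i) and (ii).

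The hard part will be the structural normalization used for $|\mathcal{Y}|\ge3$: making the split/merge argument watertight --- since a split may momentarily create an extra symbol, one phrases it as a statement about the supremum over all label sets and then observes that the optimal triple occupies only three symbols, so no room is wasted once $|\mathcal{Y}|\ge3$ --- and recognizing that forfeiting exactly one symbol at $|\mathcal{Y}|=2$ is what splits the answer into the regimes $\theta\le\theta_0$ and $\theta>\theta_0$. The remaining ingredients (the slope sign through the $m(1-m)$ identity and the vertex comparison through the factor $a((w-a)-\theta\delta)$) are routine but need to be executed exactly to land the stated closed forms.
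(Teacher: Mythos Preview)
Your proof is correct and complete, but it proceeds along a different line from the paper's argument, so a brief comparison is in order.

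For $|\mathcal{Y}|\ge3$, the paper first restricts to extreme points of the feasible polytope and shows, via a perturbation argument, that any extreme point has at most one outcome $y$ with $p_0(y)p_1(y)>0$; it then compares such an extreme point directly to the candidate $(\bar p_0,\bar p_1)$ using the subadditivity of $\psi$ in a single chain of inequalities. Your route is instead a constructive \emph{split/merge} normalization: you use subadditivity to split any mixed outcome into a balanced piece plus a pure $0$- or $1$-type piece (checking that the $l^1$ constraint is preserved), merge along rays, and thereby reduce to a one-parameter family indexed by the balanced mass $\sigma_b$; the optimizer then drops out from the sign of the linear coefficient, which you identify through the identity $m(1-m)-(2w-1)^2\theta(1-\theta)=w(1-w)$. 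Both arguments hinge on the same sublinearity, but yours trades the extreme-point classification for an explicit normalization. The paper's version has the advantage of working entirely inside the given $\mathcal{Y}$, whereas your split temporarily enlarges the alphabet---a point you handle correctly by passing to the supremum over all label sets and noting that the optimum uses only three symbols. Conversely, your reduction makes the role of the three outcome ``types'' transparent and yields the closed-form value with almost no extra computation once the slope sign is known.

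For $|\mathcal{Y}|=2$, the two arguments essentially coincide: the paper reuses its extreme-point structure to list the two one-parameter edge families and shows each is monotone in its free parameter, while you enumerate the polygon vertices directly; both land on the same two candidates and the same algebraic comparison, with numerator $a\big((w-a)-\delta\theta\big)=a\delta(\theta_0-\theta)$ deciding the split between cases (i) and (ii).
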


\begin{figure}[t]
	\centering
	\includegraphics[scale=0.5]{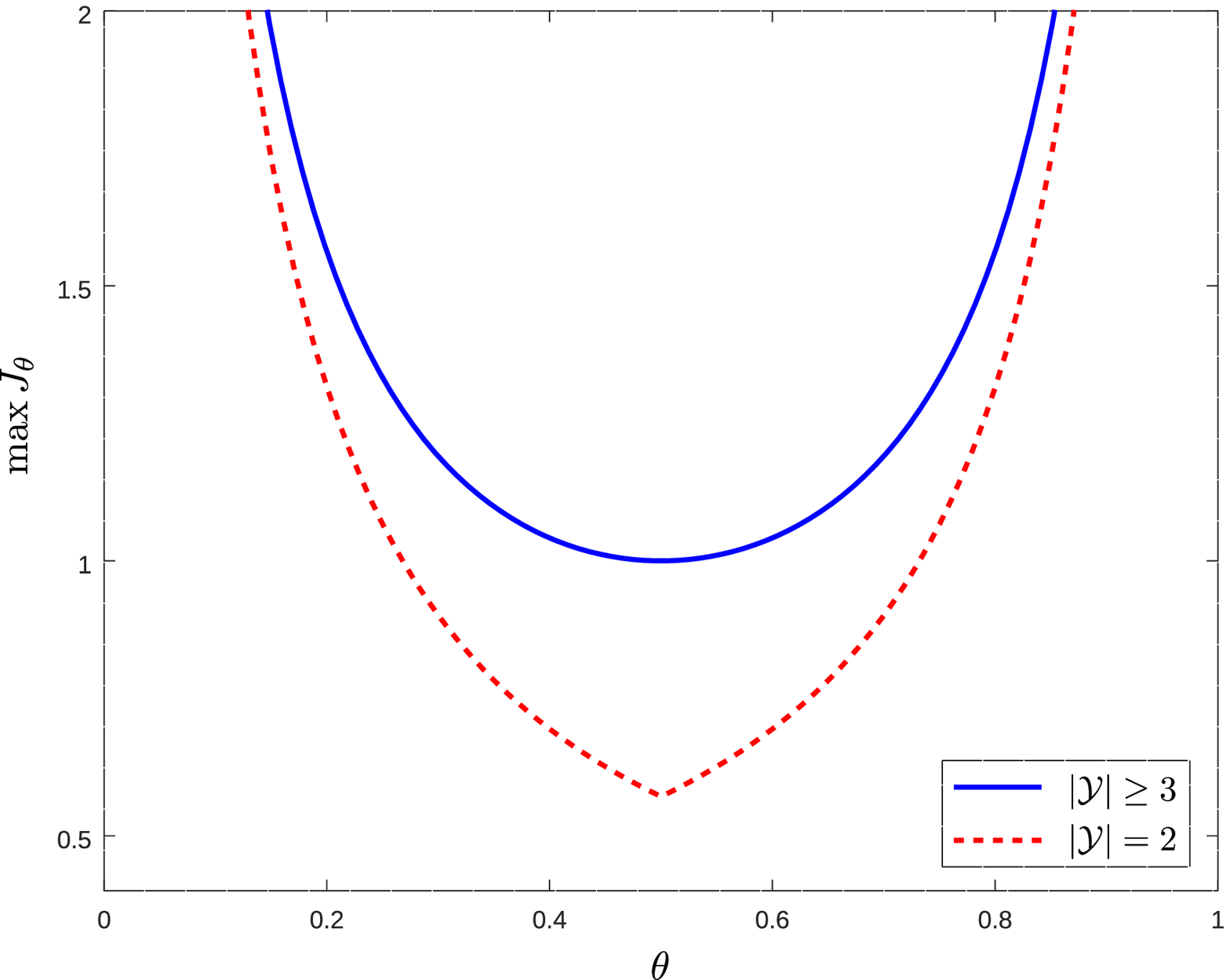}
	\vskip2ex
	\includegraphics[scale=0.5]{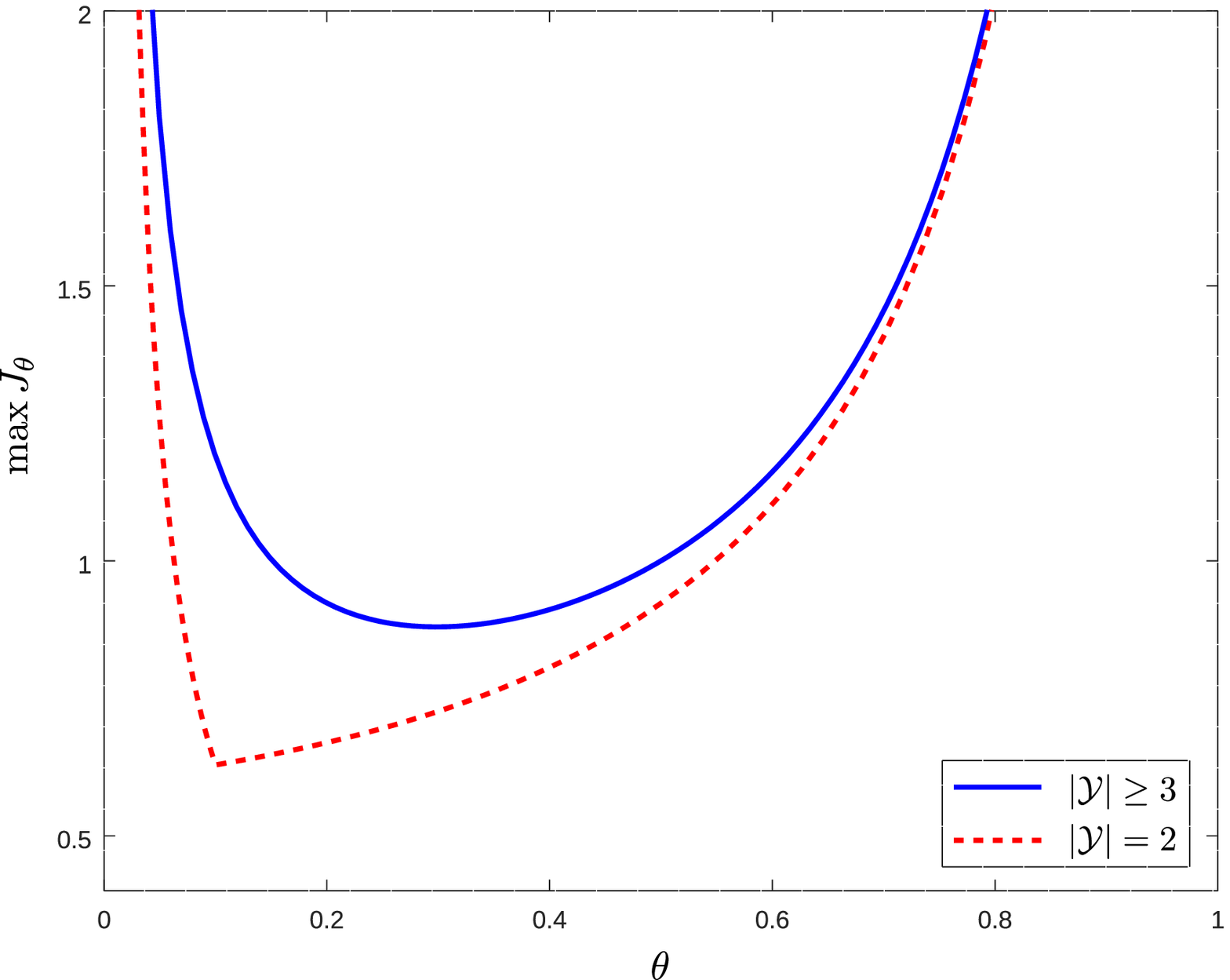}
	\caption{The blue solid curves and the red broken curves illustrate the maximum values in Theorem~\ref{c-var} 
	when $|\mathcal{Y}|\ge3$ and $|\mathcal{Y}|=2$, respectively. 
	The upper and lower figures illustrate the cases $(\delta,w)=(1/4,1/2)$ and $(\delta,w)=(1/4,2/5)$, respectively.}
	\label{F2}
\end{figure}

Pairs of two distributions in the cases (iii) contain all pairs in the case $|\mathcal{Y}|=2$. 
Hence the maximum Fisher information can be achieved 
at least when the number of elements of the probability space $\mathcal{Y}$ is three.
In this case, the optimal choice of two distributions $p_0$ and $p_1$ does not depend on $\theta$.
However, it depends on $\delta$ and $w$. 
Fig.~\ref{F2} illustrates the relation between the parameter $\theta$ and the maximum value given in Theorem~\ref{c-var} 
when the threshold $\delta$ and the weight $w$ take the specific values. 
Note that in this figure, one can readily identify the difference between 
the case (iii) $|\mathcal{Y}|\ge3$ and the case $|\mathcal{Y}|=2$ which is the combination of (i) and (ii).

\section{Optimization with general sublinear function}\label{S3}
To prove Theorem~\ref{c-var} when $|\mathcal{Y}|=3$, we maximize a more general objective function, 
which is the sum of values of a \textit{sublinear function}. 
Although this objective function was optimized in \cite{KOV16} under the $(\epsilon,0)$-differential privacy constraint, 
we optimize it under the $l^1$-norm constraint. 
First, we define sublinear functions as follows.

\begin{definition}\label{dfn:subl}
	A function $\psi: [0,\infty)^2\to\mathbb{R}$ is sublinear 
	if the following conditions hold: 
	\begin{itemize}
		\item
		$\psi(\alpha x,\alpha y) = \alpha\psi(x,y)$ for all $\alpha>0$ and $x,y\ge0$; 
		\item
		$\psi(x_1+x_2,y_1+y_2) \le \psi(x_1,y_1) + \psi(x_2,y_2)$ for all $x_1,x_2,y_1,y_2\ge0$.
	\end{itemize}
\end{definition}

Any sublinear function is convex. 
Thus the sum of values of a sublinear function is also convex. 
Conversely, if a convex function $\psi$ satisfies the first condition in Definition~\ref{dfn:subl}, 
then $\psi$ is sublinear. This fact follows from Definition~\ref{dfn:subl} immediately.

Let $\psi$ be a sublinear function and define the function $\Psi$ as 
\begin{equation}
	\Psi(p_0,p_1) := \sum_{y\in\mathcal{Y}} \psi(p_0(y),p_1(y)) \label{LPR}
\end{equation}
for any two distributions $p_0$ and $p_1$. 
Then we can maximize $\Psi$ under the $l^1$-norm constraint as follows.

\begin{theorem}\label{thm:subl}
	When the conditions $|\mathcal{Y}|\ge3$, $w\in[a,1-a]$, and $\delta\in(0,1)$ hold, 
	the maximization problem 
	\begin{equation}
		\max_{\|(1-w)p_0 - w p_1\|_1\le\delta} \Psi(p_0,p_1) \label{eq:subl1}
	\end{equation}
	achieves the maximum value at the ordered pair $(\bar{p}_0,\bar{p}_1)$ of the two distributions 
	\[
	\bar{p}_0 = \Bigl[ \frac{a}{1-w}, 1-\frac{a}{1-w}, 0 \Bigr],\quad
	\bar{p}_1 = \Bigl[ \frac{a}{w}, 0, 1-\frac{a}{w} \Bigr].
	\]
\end{theorem}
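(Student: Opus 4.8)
The plan is to reduce the maximization over pairs of distributions on an arbitrary $\mathcal{Y}$ with $|\mathcal{Y}|\ge3$ to a one-parameter linear optimization over pairs supported on at most three atoms. Since $\Psi$ is convex (a sum of convex sublinear functions) on the compact convex polytope of feasible pairs, its maximum is attained at an extreme point of that polytope; rather than enumerate extreme points directly, I would obtain the reduction constructively, using only the homogeneity and subadditivity of $\psi$, which is more transparent and automatically locates the optimizer.

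For a feasible pair $(p_0,p_1)$, set $q(y):=(1-w)p_0(y)-wp_1(y)$ and split each atom according to the sign of $q(y)$: on $\{q(y)>0\}$ write $(p_0(y),p_1(y))=\bigl(p_0(y)-\tfrac{w}{1-w}p_1(y),\,0\bigr)+\bigl(\tfrac{w}{1-w}p_1(y),\,p_1(y)\bigr)$, symmetrically on $\{q(y)<0\}$, and leave atoms with $q(y)=0$ (which already lie on the ray $\{(cw,c(1-w)):c\ge0\}$) unchanged. Each summand stays on the same side of the line $(1-w)x=wy$ as the original atom, so $\|(1-w)p_0-wp_1\|_1$ is preserved \emph{exactly}; meanwhile subadditivity gives $\psi(p_0(y),p_1(y))\le\psi(\text{first part})+\psi(\text{second part})$, so $\Psi$ does not decrease. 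By positive homogeneity all ``pure'' first parts aggregate into a single atom $(B,0)$, all ``pure'' symmetric parts into $(0,D)$, and all ``balanced'' parts into one atom $(Cw,C(1-w))$, with $\Psi$ unchanged under these merges. Because $|\mathcal{Y}|\ge3$, the resulting pair $\tilde p_0=[Cw,B,0]$, $\tilde p_1=[C(1-w),0,D]$ lives in $\mathcal{Y}$, is feasible, and satisfies $\Psi(\tilde p_0,\tilde p_1)\ge\Psi(p_0,p_1)$.

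It then remains to optimize over these three-atom pairs. Normalization forces $Cw+B=1$ and $C(1-w)+D=1$, and together with $B,C,D\ge0$ and the constraint $\|(1-w)\tilde p_0-w\tilde p_1\|_1=(1-w)B+wD=1-2Cw(1-w)\le\delta$ this is equivalent to $C$ ranging over the interval $[\,a/(w(1-w)),\,1/\max\{w,1-w\}\,]$, which is nonempty exactly because $w\in[a,1-a]$ yields $a\le\min\{w,1-w\}$. On this interval
\[ \Psi(\tilde p_0,\tilde p_1)=\psi(1,0)+\psi(0,1)+C\bigl(\psi(w,1-w)-w\psi(1,0)-(1-w)\psi(0,1)\bigr), \]
which is affine in $C$; subadditivity and homogeneity give $\psi(w,1-w)\le\psi(w,0)+\psi(0,1-w)=w\psi(1,0)+(1-w)\psi(0,1)$, so the coefficient of $C$ is nonpositive and the maximum occurs at the left endpoint $C=a/(w(1-w))$. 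Then $Cw=a/(1-w)$, $C(1-w)=a/w$, $B=1-a/(1-w)$, $D=1-a/w$, i.e.\ $(\tilde p_0,\tilde p_1)=(\bar p_0,\bar p_1)$, which is itself feasible since $a\le\min\{w,1-w\}$; combined with the reduction step this proves the claim.

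The step I expect to be the main obstacle is the sign-based splitting: one must decompose each atom so that the nonsmooth $\ell^1$-constraint is preserved \emph{with equality} — so that feasibility is never lost — while the subadditivity inequality still points toward larger $\Psi$. Verifying that the three aggregate masses $B$, $C$, $D$ are simultaneously consistent with both normalizations takes a little bookkeeping; once that is in place, the whole problem collapses to the single inequality $\psi(w,1-w)\le w\psi(1,0)+(1-w)\psi(0,1)$, which picks out the endpoint and hence the optimal pair.
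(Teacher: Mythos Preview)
Your proof is correct and takes a genuinely different route from the paper's. The paper argues via extreme points of the feasible polytope: it first shows (by a perturbation argument) that any extreme point has exactly one atom $y$ with $p_0(y)p_1(y)>0$, then deduces from the constraint that $\min\{(1-w)p_0(y),\,wp_1(y)\}\ge a$ at that atom, and finally compares $\Psi$ there with $\Psi(\bar p_0,\bar p_1)$ by a direct sublinearity calculation. You bypass the extreme-point characterization entirely: your sign-based splitting constructively maps \emph{any} feasible pair to a three-atom pair with identical $\ell^1$-value and at least as large $\Psi$, collapsing the problem to an affine one-parameter optimization whose slope is nonpositive by the single inequality $\psi(w,1-w)\le w\psi(1,0)+(1-w)\psi(0,1)$. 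Your argument is more direct and self-contained for this particular theorem; the paper's extreme-point description, on the other hand, is reused later when characterizing \emph{all} maximizers of the Fisher information, where that structural boundary information is exactly what is needed.
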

\begin{proof}
	We regard the probability space $\mathcal{Y}$ as the set $\{ 1,\ldots,N \}$. 
	To show this theorem, we remark a few facts. 
	First, the domain of the maximization problem \eqref{eq:subl1} is compact and convex. 
	Second, the objective function is convex. 
	Thus the maximum value is achieved at an extreme point of the domain. 
	We assume that $(p_0,p_1)$ is an extreme point of the domain in the following steps.
	\par\setcounter{count}{0}
	\noindent\textbf{Step~\num.}
	We show that there exists an element $y\in\mathcal{Y}$ satisfying $p_0(y)p_1(y) > 0$ by contradiction. 
	Suppose that any element $y\in\mathcal{Y}$ satisfies $p_0(y)p_1(y) = 0$. 
	This assumption implies $\delta = \|(1-w)p_0 - w p_1\|_1 = (1-w) + w = 1$, 
	which contradicts the assumption $0<\delta<1$. 
	Therefore, an element $y\in\mathcal{Y}$ satisfies $p_0(y)p_1(y) > 0$. 
	Without loss of generality, we may assume that the above element $y$ is $1$, i.e., $p_0(1)p_1(1) > 0$ by changing elements if necessarily.
	\par
	\noindent\textbf{Step~\num.}
	We prove that any element $y\not=1$ satisfies $p_0(y)p_1(y) = 0$ by contradiction. 
	Suppose $p_0(2)p_1(2) > 0$ by changing elements if necessarily. 
	Taking a sufficiently small positive number $\epsilon$, 
	we define the distributions $q_k$ and $q'_k$ for $k=0,1$ as 
	\begin{gather*}
		q_0(1) = p_0(1) - w\epsilon,\quad q'_0(1) = p_0(1) + w\epsilon,\\
		q_0(2) = p_0(2) + w\epsilon,\quad q'_0(2) = p_0(2) - w\epsilon,\\
		q_0(y) = q'_0(y) = p_0(y) \quad (3\le y\le N),\\
		q_1(1) = p_1(1) - (1-w)\epsilon,\quad q'_1(1) = p_1(1) + (1-w)\epsilon,\\
		q_1(2) = p_1(2) + (1-w)\epsilon,\quad q'_1(2) = p_1(2) - (1-w)\epsilon,\\
		q_1(y) = q'_1(y) = p_1(y) \quad (3\le y\le N).
	\end{gather*}
	Then, for any $k=0,1$, the relations 
	$\|(1-w)q_0 - w q_1\|_1 = \|(1-w)q'_0 - w q'_1\|_1 \le \delta$, 
	$p_k = (q_k + q'_k)/2$, and $q_k \not= q'_k$ hold, 
	which contradicts that the point $(p_0,p_1)$ is an extreme point of the domain. 
	Thus any element $y\not=1$ satisfies $p_0(y)p_1(y) = 0$.
	\par
	\noindent\textbf{Step~\num.}
	We show the relations 
	\begin{gather}
		\min\{ (1-w)p_0(1), w p_1(1) \} \ge a,\label{eq:min}\\
		\begin{split}
			\Psi(p_0,p_1) &= \psi(p_0(1),p_1(1))\\
			&\quad+ (1-p_0(1))\psi(1,0) + (1 - p_1(1))\psi(0,1).
		\end{split}\label{eq:subl2}
	\end{gather}
	Using the result in Step~2, for each element $y\not=1$, 
	we can take an element $k(y)\in\mathbb{Z}/2\mathbb{Z}$ satisfying $p_{k(y)+1}(y)=0$. 
	Then the inequality 
	\begin{align*}
		\delta &\ge \|(1-w)p_0 - w p_1\|_1\\
		&= |(1-w)p_0(1) - w p_1(1)|\\
		&\quad+ (1-w)\sum_{\scriptsize
		\begin{array}{c}
			2\le y\le N\\
			k(y)=0
		\end{array}
		} p_0(y) + w\sum_{\scriptsize
		\begin{array}{c}
			2\le y\le N\\
			k(y)=1
		\end{array}
		} p_1(y)\\
		&= |(1-w)p_0(1) - w p_1(1)|\\
		&\quad+ (1-w)(1 - p_0(1)) + w(1 - p_1(1))\\
		&= |(1-w)p_0(1) - w p_1(1)| + 1 - (1-w)p_0(1) - w p_1(1)
	\end{align*}
	holds, which means \eqref{eq:min}. Further, \eqref{eq:subl2} is verified as follows: 
	\begin{align*}
		\Psi(p_0,p_1) &= \psi(p_0(1),p_1(1)) + \sum_{\scriptsize
		\begin{array}{c}
			2\le y\le N\\
			k(y)=0
		\end{array}
		} p_0(y)\psi(1,0)\\
		&\quad+ \sum_{\scriptsize
		\begin{array}{c}
			2\le y\le N\\
			k(y)=1
		\end{array}
		} p_1(y)\psi(0,1)\\
		&= \psi(p_0(1),p_1(1))\\
		&\quad+ (1-p_0(1))\psi(1,0) + (1 - p_1(1))\psi(0,1).
	\end{align*}
	\par
	\noindent\textbf{Step~\num.}
	In this step, assuming $N\ge3$, we show our assertion. 
	The sublinearity of $\psi$ implies 
	\begin{align}
		&\Psi(\bar{p}_0,\bar{p}_1) - \Psi(p_0,p_1)\nonumber\\
		\overset{(a)}{=}& \psi\Bigl( \frac{a}{1-w}, \frac{a}{w} \Bigr) + \Bigl( 1-\frac{a}{1-w} \Bigr)\psi(1,0) + \Bigl( 1-\frac{a}{w} \Bigr)\psi(0,1)\nonumber\\
		&- \psi(p_0(1),p_1(1))\nonumber\\
		&- (1-p_0(1))\psi(1,0) - (1-p_1(1))\psi(0,1)\nonumber\\
		=& \psi\Bigl( \frac{a}{1-w}, \frac{a}{w} \Bigr) - \psi(p_0(1),p_1(1))\nonumber\\
		&+ \Bigl( p_0(1)-\frac{a}{1-w} \Bigr)\psi(1,0) + \Bigl( p_1(1)-\frac{a}{w} \Bigr)\psi(0,1)\nonumber\\
		\ge& \psi\Bigl( \frac{a}{1-w}, \frac{a}{w} \Bigr) - \psi\Bigl( \frac{a}{1-w}, \frac{a}{w} \Bigr)\nonumber\\
		&- \psi\Bigl( p_0(1)-\frac{a}{1-w}, 0 \Bigr) - \psi\Bigl( 0, p_1(1)-\frac{a}{w} \Bigr)\nonumber\\
		&+ \Bigl( p_0(1)-\frac{a}{1-w} \Bigr)\psi(1,0) + \Bigl( p_1(1)-\frac{a}{w} \Bigr)\psi(0,1)\label{eq:fisher1}\\
		\overset{(b)}{=}& 0\nonumber,
	\end{align}
	where \eqref{eq:subl2} and \eqref{eq:min} have been used to obtain $(a)$ and $(b)$, respectively. 
	Therefore, $\Psi(\bar{p}_0,\bar{p}_1)$ is the maximum value of the maximization problem \eqref{eq:subl1}.
\end{proof}

\begin{proof}[Proof of Theorem~\ref{c-var}]
	To use Theorem~\ref{thm:subl}, we check a few facts. 
	First, the Fisher information $J_\theta(p_0,p_1)$ can be written as 
	\begin{equation}
		J_\theta(p_0,p_1) = \sum_{y\in\mathcal{Y}} \frac{(p_1(y) - p_0(y))^2}{p_\theta(y)}. \label{eq:fisher2}
	\end{equation}
	Second, the function $(x,y)\mapsto x^2/y$ is convex \cite[Section~3.2.6]{Boyd} and 
	thus the function $\psi: [0,\infty)^2\to\mathbb{R}$ defined as 
	\[
	\psi(x,y) = 
	\begin{dcases}
		\frac{(y-x)^2}{(1-\theta)x+\theta y} & x+y>0,\\
		0 & x=y=0,
	\end{dcases}
	\]
	is sublinear. 
	Hence the objective function can be written as $J_\theta(p_0,p_1) = \sum_{y\in\mathcal{Y}} \psi(p_0(y),p_1(y))$. 
	Therefore, Theorem~\ref{thm:subl} implies Theorem~\ref{c-var} when $|\mathcal{Y}|\ge3$.
	\par
	We show Theorem~\ref{c-var} when $|\mathcal{Y}|=2$. 
	In this case, the maximum value is also achieved at an extreme point of the domain; 
	Steps~1--3 in the proof of Theorem~\ref{thm:subl} also hold. 
	Let $(p_0,p_1)$ be an extreme point of the domain. 
	Then one of the following two cases occurs: 
	\begin{gather}
		a/w\le\exists x\le1,\quad p_0 = [1,0],\quad p_1 = [x,1-x],\nonumber\\
		J_\theta(p_0,p_1) = \frac{(1-x)^2}{1-\theta+\theta x} + \frac{1-x}{\theta}
		= \frac{1-x}{\theta(1-\theta+\theta x)} \label{eq:fisher3}
	\end{gather}
	and 
	\begin{gather}
		a/(1-w)\le\exists x\le1,\quad p_0 = [x,1-x],\quad p_1 = [1,0],\nonumber\\
		\begin{split}
			J_\theta(p_0,p_1) &= \frac{(1-x)^2}{(1-\theta)x + \theta} + \frac{1-x}{1-\theta}\\
			&= \frac{1-x}{(1-\theta)\{ (1-\theta)x + \theta \}},
		\end{split}\label{eq:fisher4}
	\end{gather}
	where note \eqref{eq:min}. 
	The values \eqref{eq:fisher3} and \eqref{eq:fisher4} strictly decrease as $x$ increases. 
	Thus we obtain $x=a/w$ and $x=a/(1-w)$ in the first and second cases, respectively. 
	Then the difference between the right-hand sides of \eqref{eq:fisher3} and \eqref{eq:fisher4} equals 
	\begin{align*}
		&\frac{1-a/w}{\theta(1-\theta+\theta a/w)} - \frac{1-a/(1-w)}{(1-\theta)\{ (1-\theta)a/(1-w) + \theta \}}\\
		=& \frac{w-a}{\theta\{ (1-\theta)w+\theta a \}} - \frac{1-w-a}{(1-\theta)\{ (1-\theta)a + \theta(1-w) \}}\\
		=& \bigl[ (w-a)(1-\theta)\{ a(1-\theta) + (1-w)\theta \}\\
		&- (1-w-a)\theta\{ w(1-\theta) + a\theta \} \bigr]\\
		&\quad\Big/ \theta(1-\theta)\{ (1-\theta)w+\theta a \}\{ (1-\theta)a + \theta(1-w) \}.
	\end{align*}
	This numerator is calculated as follows: 
	\begin{align*}
		&(w-a)(1-\theta)\{ a(1-\theta) + (1-w)\theta \}\\
		&- (1-w-a)\theta\{ w(1-\theta) + a\theta \}\\
		=& \theta(1-\theta)\{(w-a)(1-w) - (1-w-a)w\}\\
		&+ (w-a)a(1-\theta)^2 - (1-w-a)a\theta^2\\
		=& \theta(1-\theta)(2w-1)a + \{ w(1-\theta)^2 - (1-w)\theta^2 \}a\\
		&+ \{ \theta^2 - (1-\theta)^2 \}a^2\\
		=& \theta(1-\theta)(2w-1)a + \{ w(2\theta^2 - 2\theta + 1) - \theta^2 \}a\\
		&+ (2\theta-1)a^2\\
		=& (w-\theta)a + (2\theta-1)a^2 = a\{ (2a-1)\theta+w-a \}\\
		=& a(-\delta\theta+w-a) = a\delta(\theta_0-\theta).
	\end{align*}
	Therefore, Theorem~\ref{c-var} has been proved.
\end{proof}

\section{Dependence on weight}\label{S4}
When $|\mathcal{Y}|\ge3$, the optimal pair in Theorem~\ref{c-var} depends on the weight $w$. 
We are interested in whether we can take an optimal pair that is independent of the weight $w$. 
Strictly speaking, we are interested in whether there exists an ordered pair $(p_0,p_1)$ such that 
for any weight $w\in[a,1-a]$ the value $J_\theta(p_0,p_1)$ is the maximum value. 
The following theorem answers this question.

\begin{theorem}\label{opt-s}
	Assume $|\mathcal{Y}|\ge3$. 
	If an ordered pair $(p_0,p_1)$ of two distributions achieves the maximum value in Theorem~\ref{c-var}, 
	then any element $y\in\mathcal{Y}$ with $p_0(y)p_1(y)>0$ satisfies 
	\begin{equation}
		(1-w)p_0(y)=w p_1(y). \label{eq:opt-s1}
	\end{equation}
	Moreover, the above ordered pair $(p_0,p_1)$ satisfies 
	\begin{equation}
		(1-w)\sum_{p_1(y)>0} p_0(y) = w\sum_{p_0(y)>0} p_1(y) = a. \label{eq:opt-s2}
	\end{equation}
	Conversely, if an ordered pair $(p_0,p_1)$ satisfies the above two conditions, 
	then it achieves the maximum value in Theorem~\ref{c-var}.
\end{theorem}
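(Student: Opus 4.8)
The plan is to prove the sharp inequality $J_\theta(p_0,p_1)\le M$ for \emph{every} feasible pair $(p_0,p_1)$ by one uniform argument — here $M$ denotes the value in case~(iii) of Theorem~\ref{c-var} — and then to extract its equality conditions, which will be precisely \eqref{eq:opt-s1}--\eqref{eq:opt-s2}; the converse direction is then a direct substitution. The key device is to peel off, at each symbol, the component along the distinguished direction $(w,1-w)$: for $y\in\mathcal{Y}$ set $\mu_y:=\min\{p_0(y)/w,\ p_1(y)/(1-w)\}$ and $\Lambda:=\sum_{y\in\mathcal{Y}}\mu_y$, so that
\[
(p_0(y),p_1(y))=\mu_y(w,1-w)+\bigl(p_0(y)-\mu_y w,\ p_1(y)-\mu_y(1-w)\bigr),
\]
where both residual coordinates are $\ge0$ and at least one of them vanishes, and $\mu_y>0$ exactly when $p_0(y)p_1(y)>0$.

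First I would rewrite the privacy constraint through $\Lambda$. Since one residual coordinate is always zero, $|(1-w)p_0(y)-wp_1(y)|=(1-w)(p_0(y)-\mu_y w)+w(p_1(y)-\mu_y(1-w))$ for every $y$; summing over $y$ and using $\sum_y p_0(y)=\sum_y p_1(y)=1$ gives the identity
\[
\|(1-w)p_0-wp_1\|_1=1-2w(1-w)\Lambda .
\]
Hence feasibility is equivalent to $\Lambda\ge\Lambda_0:=a/(w(1-w))$. Next I would bound the Fisher information by applying sublinearity of $\psi$ (subadditivity together with positive homogeneity, with $\psi$ as in the proof of Theorem~\ref{c-var}) to the decomposition term by term, using $\psi(1,0)=1/(1-\theta)$ and $\psi(0,1)=1/\theta$:
\[
J_\theta(p_0,p_1)\le\Lambda\psi(w,1-w)+\frac{1-\Lambda w}{1-\theta}+\frac{1-\Lambda(1-w)}{\theta}=\frac{1}{\theta(1-\theta)}+\Lambda K,
\]
where $K:=\psi(w,1-w)-w/(1-\theta)-(1-w)/\theta$. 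Writing $D:=(1-\theta)w+\theta(1-w)\in(0,1)$, a short computation gives the closed form $K=-w(1-w)/\{\theta(1-\theta)D\}<0$, together with $1/\{\theta(1-\theta)\}+\Lambda_0 K=M$. Since $K<0$, the bound $\Lambda\ge\Lambda_0$ forces $J_\theta(p_0,p_1)\le M$, re-deriving case~(iii) of Theorem~\ref{c-var}.

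It then remains to analyze equality, which yields the theorem. If $J_\theta(p_0,p_1)=M$, both displayed inequalities are equalities: $\Lambda=\Lambda_0$ — equivalently, the privacy constraint is tight — and the subadditivity of $\psi$ is an equality at every $y$. Viewing $\psi(x,y)=(y-x)^2/\{(1-\theta)x+\theta y\}$ as a perspective function (after the invertible change of coordinates $(x,y)\mapsto((1-\theta)x+\theta y,\ y-x)$ it becomes $b'^2/a'$), equality in subadditivity forces the two summands of the decomposition to be parallel as vectors; since $(w,1-w)$ is parallel neither to $(1,0)$ nor to $(0,1)$ (each case collapses to $w=1$ or $w=0$, excluded), the residual at every $y$ with $p_0(y)p_1(y)>0$ must vanish, i.e.\ $(p_0(y),p_1(y))$ is a positive multiple of $(w,1-w)$; this is \eqref{eq:opt-s1}. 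Summing $p_0(y)=\mu_y w$ over $\{y:p_1(y)>0\}$ then gives $(1-w)\sum_{p_1(y)>0}p_0(y)=(1-w)w\Lambda_0=a$, and symmetrically for $p_1$, which is \eqref{eq:opt-s2}. Conversely, a pair satisfying \eqref{eq:opt-s1}--\eqref{eq:opt-s2} has $\Lambda=\Lambda_0$, makes every inequality above an equality, and satisfies $\|(1-w)p_0-wp_1\|_1=1-2a=\delta$, hence is feasible and attains $M$.

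The main obstacle is spotting the decomposition along $(w,1-w)$: once it is in hand, both the privacy measure and the Fisher information become affine in the single scalar $\Lambda$, and the remaining identities ($K<0$ and $1/\{\theta(1-\theta)\}+\Lambda_0K=M$, both reducing to $D(1-D)-(1-2w)^2\theta(1-\theta)=w(1-w)$) are routine algebra. The lone subtlety in the equality case is checking the non-parallelism of $(w,1-w)$ with the pure directions, which is exactly where $w\in(0,1)$ (equivalently $|1-2w|<1$, guaranteed by $\delta<1$) is used; a minor point worth noting is that this argument never restricts to extreme points of the feasible set, so it characterizes \emph{all} maximizers rather than just the extremal one produced in the proof of Theorem~\ref{c-var}.
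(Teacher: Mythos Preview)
Your argument is correct and is a genuinely different route from the paper's. The paper does not work with your decomposition $(p_0(y),p_1(y))=\mu_y(w,1-w)+\text{(residual)}$ at all. Instead it argues as follows: given a maximizer $(p_0,p_1)$, for any two symbols $y_1,y_2$ with $p_0(y_i)p_1(y_i)>0$ it builds a one-parameter perturbation $(q_0,q_1),(q_0',q_1')$ that stays feasible and averages back to $(p_0,p_1)$; maximality then forces equality in the convexity inequality for $J_\theta$, and the paper's Lemma~\ref{conv-eq} (the determinant criterion) yields $(1-w)p_0(y_i)=wp_1(y_i)$. The second condition \eqref{eq:opt-s2} is then read off from equality in the chain \eqref{eq:fisher1} of the proof of Theorem~\ref{thm:subl}, again via the determinant criterion. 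The converse is a direct substitution, as in your proof.

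What each approach buys: your decomposition makes the whole problem one-dimensional in the scalar $\Lambda$, gives a self-contained re-derivation of the upper bound $J_\theta\le M$ (bypassing the extreme-point analysis of Theorem~\ref{thm:subl} entirely), and the two equality conditions $\Lambda=\Lambda_0$ and ``residual $=0$ whenever $\mu_y>0$'' translate immediately into \eqref{eq:opt-s2} and \eqref{eq:opt-s1}. The paper's perturbation method is more modular---it recycles the machinery already built for Theorem~\ref{thm:subl}---and makes no use of the explicit form of $M$; on the other hand it requires handling the cases $r_1=1$ and $r_1\ge2$ separately and threading back through the inequality chain \eqref{eq:fisher1}. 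Your use of the equality case of subadditivity (via the perspective-function viewpoint) is in substance the same content as Lemma~\ref{conv-eq}, just applied to the summands of your decomposition rather than to perturbed pairs. One small remark: the step ``$\Lambda=\Lambda_0$'' deserves the one extra line you hint at---from $M\le 1/\{\theta(1-\theta)\}+\Lambda K$ and $K<0$ one gets $\Lambda\le\Lambda_0$, which together with feasibility gives equality---since the privacy constraint being tight is a conclusion here, not an assumption.
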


Indeed, the pair $(\bar{p}_0,\bar{p}_1)$ given in Theorem~\ref{thm:subl} satisfies the two conditions in Theorem~\ref{opt-s}. 
Actually, when $|\mathcal{Y}|=3$, the pair $(\bar{p}_0,\bar{p}_1)$ is a unique optimal pair up to rearrangement of ($p_0$ and $p_1$'s) components. 
The case $|\mathcal{Y}|>3$ yields some degrees of freedom. 
In fact, all optimal pairs $(p_0,p_1)$ can be written as 
\begin{align*}
	p_0 &= \Bigl[ \underbrace{\frac{a}{1-w}b_1,\ldots,\frac{a}{1-w}b_{r_1}}_{r_1},\\
	&\qquad\Bigl( \underbrace{1-\frac{a}{1-w} \Bigr)b_{r_1+1},\ldots,\Bigl( 1-\frac{a}{1-w} \Bigr)b_{r_2}}_{r_2-r_1},\\
	&\qquad\qquad\underbrace{0,\ldots,0}_{r_3-r_2}, \underbrace{0,\ldots,0}_{|\mathcal{Y}|-r_3} \Bigr],\\
	p_1 &= \Bigl[ \underbrace{\frac{a}{w}b_1,\ldots,\frac{a}{w}b_{r_1}}_{r_1}, 
	\underbrace{0,\ldots,0}_{r_2-r_1},\\
	&\qquad\underbrace{\Bigl( 1-\frac{a}{w} \Bigr)b_{r_2+1},\ldots,\Bigl( 1-\frac{a}{w} \Bigr)b_{r_3}}_{r_3-r_2}, 
	\underbrace{0,\ldots,0}_{|\mathcal{Y}|-r_3} \Bigr]
\end{align*}
up to rearrangement of ($p_0$ and $p_1$'s) components, 
where $b_1,\ldots,b_{|\mathcal{Y}|}$ are arbitrary non-negative numbers satisfying 
$\sum_{y=1}^{r_1} b_y = \sum_{y=r_1+1}^{r_2} b_y = \sum_{y=r_2+1}^{r_3} b_y = \sum_{y=r_3+1}^{|\mathcal{Y}|} b_y = 1$, and 
$1\le r_1<r_2<r_3\le|\mathcal{Y}|$ are arbitrary integers. 
Hence the parameters $b_1,\ldots,b_{|\mathcal{Y}|}$, $r_1$, $r_2$, and $r_3$ describe degrees of freedom in the case $|\mathcal{Y}|>3$.

Further, Theorem~\ref{opt-s} yields the following corollary immediately.

\begin{corollary}\label{coro1}
	Assume $|\mathcal{Y}|\ge3$. 
	Then, for any two distinct weights. no ordered pair of two distributions achieves the maximum value in Theorem~\ref{c-var}.
\end{corollary}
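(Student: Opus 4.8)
The plan is to deduce Corollary~\ref{coro1} directly from the necessary condition \eqref{eq:opt-s2} of Theorem~\ref{opt-s}. The crucial point is that the threshold $\delta$ is a fixed parameter of the problem, so the constant $a = (1-\delta)/2$ does \emph{not} vary with the weight $w$; only $w$ is being changed. Accordingly, I would argue by contradiction: assume that a single ordered pair $(p_0,p_1)$ of two distributions achieves the maximum value in Theorem~\ref{c-var} simultaneously for two distinct weights $w_1,w_2\in[a,1-a]$.

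First I would invoke Theorem~\ref{opt-s} with $w=w_1$ to get $(1-w_1)\sum_{p_1(y)>0}p_0(y)=a$, and then again with $w=w_2$ to get $(1-w_2)\sum_{p_1(y)>0}p_0(y)=a$. Setting $c:=\sum_{p_1(y)>0}p_0(y)$, these two equalities say $(1-w_1)c=(1-w_2)c=a$. Since $\delta\in(0,1)$ we have $a=(1-\delta)/2>0$, which forces $c>0$; dividing through by $c$ yields $1-w_1=1-w_2$, i.e.\ $w_1=w_2$, contradicting $w_1\neq w_2$. (The companion equality $w_1\sum_{p_0(y)>0}p_1(y)=w_2\sum_{p_0(y)>0}p_1(y)=a$ gives the same conclusion, so either half of \eqref{eq:opt-s2} suffices.)

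I do not expect a genuine obstacle: the entire content of the corollary is carried by Theorem~\ref{opt-s}, and what remains is the elementary cancellation above. The one place to be slightly careful is in recording that $a$ is independent of $w$, so that the two instances of \eqref{eq:opt-s2} really share the same right-hand side — this is immediate from $a=(1-\delta)/2$ with $\delta$ fixed. It is also worth noting, for completeness, that both feasibility of the maximization and the applicability of Theorem~\ref{opt-s} already presuppose $w_i\in[a,1-a]$; outside this range the feasible set $\{\,\|(1-w)p_0-wp_1\|_1\le\delta\,\}$ is empty (since $|1-2w|\le\|(1-w)p_0-wp_1\|_1$), so the statement is vacuous there and nothing is lost.
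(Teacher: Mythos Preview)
Your argument is correct and is essentially the paper's own proof: both derive the corollary directly from condition~\eqref{eq:opt-s2} of Theorem~\ref{opt-s}, using that $a=(1-\delta)/2$ is fixed while $w$ varies. The only cosmetic difference is that the paper invokes the equality $w\sum_{p_0(y)>0}p_1(y)=a$ whereas you use the companion equality $(1-w)\sum_{p_1(y)>0}p_0(y)=a$, which you yourself note is interchangeable.
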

\begin{proof}[Proof of Corollary~\ref{coro1}]
	Fixing a threshold $\delta$, 
	we assume that an ordered pair $(p_0,p_1)$ of two distributions achieves the maximum value in Theorem~\ref{c-var} for some weight $w$. 
	Then Theorem~\ref{opt-s} yields $\sum_{p_0(y)>0} p_1(y)=a/w$. 
	Since any weight $w'\not=w$ satisfies $\sum_{p_0(y)>0} p_1(y)=a/w\not=a/w'$, 
	Theorem~\ref{opt-s} implies that the ordered pair $(p_0,p_1)$ does not achieve the maximum value in Theorem~\ref{c-var} for the weight $w'$. 
	Therefore, for any two distinct weights, no ordered pair of two distributions achieves the maximum value in Theorem~\ref{c-var}.
\end{proof}

Now, in order to prove Theorem~\ref{opt-s}, 
we need the equality condition of the convexity inequality of $J_\theta$. 
That is, we need the following lemma.
 
\begin{lemma}\label{conv-eq}
	Let $q_k$ and $q'_k$ with $k=0,1$ be distributions on $\mathcal{Y}$, and 
	$\theta$ and $t$ be real numbers with $0<\theta,t<1$. 
	Then the equation 
	\begin{equation*}
		J_\theta((1-t)q_0 + t q'_0, (1-t)q_1 + t q'_1)
		= (1-t)J_\theta(q_0,q_1) + t J_\theta(q'_0,q'_1)
	\end{equation*}
	holds if and only if any element $y\in\mathcal{Y}$ satisfies 
	\begin{equation}
		\det
		\begin{bmatrix}
			q_1(y) & q_0(y)\\
			q'_1(y) & q'_0(y)
		\end{bmatrix}
		= 0. \label{eq:fisher5}
	\end{equation}
\end{lemma}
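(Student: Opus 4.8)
The plan is to reduce the equality in the convexity inequality for $J_\theta$ to a condition on each coordinate $y\in\mathcal{Y}$ separately, and then to settle the one-coordinate question by an explicit second-derivative computation.

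First I would use the representation $J_\theta(p_0,p_1)=\sum_{y\in\mathcal{Y}}\psi(p_0(y),p_1(y))$ with the sublinear (hence convex) function $\psi$ appearing in the proof of Theorem~\ref{c-var}, namely $\psi(x,y)=(y-x)^2/((1-\theta)x+\theta y)$ for $x+y>0$ and $\psi(0,0)=0$. Setting $P_y:=(q_0(y),q_1(y))$ and $Q_y:=(q_0'(y),q_1'(y))$, convexity of $\psi$ gives $\psi((1-t)P_y+tQ_y)\le(1-t)\psi(P_y)+t\psi(Q_y)$ for each $y$, and summing over $y$ shows that the equation in the lemma's statement is equivalent to equality holding in every one of these $|\mathcal{Y}|$ inequalities. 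So it suffices to prove, for a fixed $y$, that $\psi((1-t)P_y+tQ_y)=(1-t)\psi(P_y)+t\psi(Q_y)$ holds if and only if \eqref{eq:fisher5} holds.

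For a fixed $y$ I would first dispose of the degenerate case $P_y=(0,0)$ or $Q_y=(0,0)$: then the matrix in \eqref{eq:fisher5} has a zero row, so its determinant vanishes, and homogeneity of $\psi$ together with $\psi(0,0)=0$ makes the equality automatic, so both sides of the asserted equivalence hold. Otherwise $P_y$ and $Q_y$ are nonzero points of $[0,\infty)^2$, hence every point of the segment joining them has positive coordinate-sum and $\phi(s):=\psi((1-s)P_y+sQ_y)$ is smooth on $[0,1]$. Writing $v(s)$ for the affine function equal to the second minus the first coordinate along the segment and $u(s):=(1-\theta)\cdot(\text{first coordinate})+\theta\cdot(\text{second coordinate})$, which is affine and strictly positive on $[0,1]$ because $\theta\in(0,1)$, we have $\phi=v^2/u$. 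Since $v$ and $u$ are affine, a direct computation in which the cross terms cancel yields
\[
\phi''(s)=\frac{2\bigl(v(Q_y)u(P_y)-v(P_y)u(Q_y)\bigr)^2}{u(s)^3},
\]
and a further short expansion identifies $v(Q_y)u(P_y)-v(P_y)u(Q_y)=q_0(y)q_1'(y)-q_1(y)q_0'(y)$, whose square equals the square of the determinant in \eqref{eq:fisher5}.

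From this formula the equivalence follows: if \eqref{eq:fisher5} holds then $\phi''\equiv0$, so $\phi$ is affine on $[0,1]$ and the equality holds; if \eqref{eq:fisher5} fails then $\phi''(s)>0$ for all $s\in[0,1]$, so $\phi$ is strictly convex and $\phi(t)<(1-t)\phi(0)+t\phi(1)$ at the interior point $t\in(0,1)$, so the equality fails. Collecting the coordinatewise conclusions over all $y\in\mathcal{Y}$ gives the lemma. The part I expect to require the most care is the second-derivative identity above — verifying the cancellation of cross terms both in $\phi''$ and in the simplification of $v(Q_y)u(P_y)-v(P_y)u(Q_y)$ to the stated determinant — and, on the technical side, the bookkeeping at the boundary of the quadrant: coordinates with $q_0(y)=0$ or $q_1(y)=0$ (but not both) are harmless because $\theta\in(0,1)$ keeps $u$ positive there, while the fully degenerate coordinate $P_y=(0,0)$ is handled separately by homogeneity.
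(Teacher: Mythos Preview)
Your argument is correct and follows essentially the same path as the paper: reduce to a coordinatewise equality condition for the sublinear integrand, handle the fully degenerate coordinate by homogeneity, and show in the nondegenerate case that the relevant nonnegative quantity vanishes exactly when the $2\times 2$ determinant does. The only tactical difference is that the paper expands the Jensen defect for $g(x,y)=x^2/y$ algebraically---obtaining $t(1-t)(x_0y_1-x_1y_0)^2/(y_0y_1y_t)$---and then performs the linear change of variables to $\psi$, whereas you work directly with $\psi$ via the second derivative $\phi''(s)=2(v'u-vu')^2/u^3$ of its restriction to the segment; the two computations are equivalent, and your identification $v(Q_y)u(P_y)-v(P_y)u(Q_y)=q_0(y)q_1'(y)-q_1(y)q_0'(y)$ is exactly (up to sign) the determinant simplification the paper carries out.
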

\begin{proof}
	First, we see the equality condition of the convexity inequality of the convex function $g(x,y) := x^2/y$ 
	defined for any real number $x$ and any positive number $y$. 
	Let $0<t<1$, $x_i\in\mathbb{R}$, and $y_i>0$ for $i=0,1$. 
	Putting $x_t := (1-t)x_0 + t x_1$ and $y_t := (1-t)y_0 + t y_1$, we have 
	\begin{align*}
		&(1-t)g(x_0,y_0) + tg(x_1,y_1) - g(x_t,y_t)\\
		=& \{ (1-t)x_0^2 y_1 y_t + tx_1^2 y_0 y_t - x_t^2 y_0 y_1 \} / y_0 y_1 y_t\\
		=& \{ (1-t)^2 x_0^2 y_0 y_1 + (1-t)t x_0^2 y_1^2 + t(1-t) x_1^2 y_0^2 + t^2 x_1^2y_0 y_1\\
		&- (1-t)^2 x_0^2 y_0 y_1 - t^2 x_1^2 y_0 y_1 - 2t(1-t) x_0 x_1 y_0 y_1 \} / y_0 y_1 y_t\\
		=& t(1-t)(x_0^2 y_1^2 + x_1^2 y_0^2 - 2x_0 x_1 y_0 y_1) / y_0 y_1 y_t\\
		=& t(1-t)(x_0 y_1 - x_1 y_0)^2 / y_0 y_1 y_t \ge 0.
	\end{align*}
	Therefore, the equation $g(x_t,y_t) = (1-t)g(x_0,y_0) + tg(x_1,y_1)$ holds if and only if 
	\[
	\det
	\begin{bmatrix}
		x_0 & y_0\\
		x_1 & y_1
	\end{bmatrix}
	= 0.
	\]
	\par
	Next, we see the equality condition of the convexity inequality of the convex function $f$ defined as 
	\begin{equation*}
		f(x,y) := 
		\begin{dcases}
			\frac{(y-x)^2}{(1-\theta)x + \theta y} & x+y>0,\\
			0 & x=y=0.
		\end{dcases}
	\end{equation*}
	Let $0<t<1$ and $x_i,y_i\ge0$ for $i=0,1$, and 
	let $x_t := (1-t)x_0 + tx_1$ and $y_t := (1-t)y_0 + ty_1$. 
	First, assume $x_i+y_i>0$ for $i=0,1$. 
	Noting the equality condition of the convexity inequality of $g$, we find that 
	the equation $f(x_t,y_t) = (1-t)f(x_0,y_0) + t f(x_1,y_1)$ holds if and only if 
	\[
	\det
	\begin{bmatrix}
		y_0-x_0 & (1-\theta)x_0 + \theta y_0\\
		y_1-x_1 & (1-\theta)x_1 + \theta y_1
	\end{bmatrix}
	= 0.
	\]
	The left-hand side of this equation equals 
	\[
	\det
	\begin{bmatrix}
		y_0-x_0 & x_0\\
		y_1-x_1 & x_1
	\end{bmatrix}
	= 
	\det
	\begin{bmatrix}
		y_0 & x_0\\
		y_1 & x_1
	\end{bmatrix}.
	\]
	Thus the equation $f(x_t,y_t) = (1-t)f(x_0,y_0) + t f(x_1,y_1)$ holds if and only if 
	\begin{equation}
		\det
		\begin{bmatrix}
			y_0 & x_0\\
			y_1 & x_1
		\end{bmatrix}
		= 0. \label{eq:f}
	\end{equation}
	Second, when $x_0=y_0=0$ or $x_1=y_1=0$, 
	it can be easily checked that the equations $f(x_t,y_t) = (1-t)f(x_0,y_0) + t f(x_1,y_1)$ and \eqref{eq:f} hold. 
	Therefore, summarizing the above two cases, we find that 
	the equation $f(x_t,y_t) = (1-t)f(x_0,y_0) + t f(x_1,y_1)$ holds if and only if 
	the equation \eqref{eq:f} holds.
	\par
	Finally, we show this lemma. 
	Since the Fisher information $J_\theta(p_0,p_1)$ can be written as \eqref{eq:fisher2}, 
	the equality condition \eqref{eq:fisher5} of the convexity inequality of $J_\theta$ follows from that of $f$.
\end{proof}

\begin{proof}[Proof of Theorem~\ref{opt-s}]
	Assume that an ordered pair $(p_0,p_1)$ of two distributions achieves the maximum value in Theorem~\ref{c-var} when $|\mathcal{Y}|\ge3$.
	\par\setcounter{count}{0}
	\noindent\textbf{Step~\num.}
	In the same way as Step~1 in the proof of Theorem~\ref{thm:subl}, 
	it is shown that there exists an element $y\in\mathcal{Y}$ such that $p_0(y)p_1(y)>0$. 
	By changing elements if necessarily, we may assume that 
	there exist three elements $r_1$, $r_2$, and $r_3$ with $1 \le r_1 \le r_2 \le r_3 \le N$ such that 
	\begin{gather*}
		p_0(y)p_1(y) > 0\quad(1 \le y \le r_1),\\
		p_0(y) > 0,\quad p_1(y) = 0\quad(r_1 < y \le r_2),\\
		p_0(y) = 0,\quad p_1(y) > 0\quad(r_2 < y \le r_3),\\
		p_0(y) = 0,\quad p_1(y) = 0\quad(r_3 < y \le N).
	\end{gather*}
	Then \eqref{eq:opt-s1} and \eqref{eq:opt-s2} turn to 
	\begin{gather}
		(1-w)p_0(y)=w p_1(y)\quad(1\le y\le r_1),\label{eq:opt-s1'}\\
		(1-w)\sum_{y=1}^{r_1} p_0(y) = w\sum_{y=1}^{r_1} p_1(y) = a,\label{eq:opt-s2'}
	\end{gather}
	respectively.
	\par
	\noindent\textbf{Step~\num.}
	In this step, assuming $r_1\ge2$, we show \eqref{eq:opt-s1'}. 
	Define the distributions $q_k$ and $q'_k$ for $k=0,1$ in the same way as Step~2 in the proof of Theorem~\ref{thm:subl}. 
	Then, for any $k=0,1$, the relations 
	\begin{gather*}
		\|(1-w)q_0 - w q_1\|_1 = \|(1-w)q'_0 - w q'_1\|_1 \le \delta,\\
		p_k = (q_k + q'_k)/2,\quad q_k \not= q'_k
	\end{gather*}
	hold. 
	Since the function $J_\theta$ is convex and the value $J_\theta(p_0,p_1)$ is the maximum value, we have 
	\begin{equation*}
		J_\theta(p_0,p_1) \le \frac{1}{2}J_\theta(q_0,q_1) + \frac{1}{2}J_\theta(q'_0,q'_1) \le J_\theta(p_0,p_1).
	\end{equation*}
	Then Lemma~\ref{conv-eq} implies 
	\begin{align*}
		0 &= \det
		\begin{bmatrix}
			q_1(1) & q_0(1)\\
			q'_1(1) & q'_0(1)
		\end{bmatrix}
		= \det
		\begin{bmatrix}
			q_1(1) & q_0(1)\\
			q'_1(1) - q_1(1) & q'_0(1) - q_0(1)
		\end{bmatrix}
		\\
		&= \det
		\begin{bmatrix}
			q_1(1) & q_0(1)\\
			2(1-w)\epsilon & 2w\epsilon
		\end{bmatrix}
		= 2\epsilon\det
		\begin{bmatrix}
			p_1(1) & p_0(1)\\
			1-w & w
		\end{bmatrix}
		\\
		&= -2\epsilon\{ (1-w)p_0(1) - w p_1(1) \},
	\end{align*}
	whence $(1-w)p_0(1) - w p_1(1) = 0$. 
	Lemma~\ref{conv-eq} also implies 
	\begin{align*}
		0 &= \det
		\begin{bmatrix}
			q_1(2) & q_0(2)\\
			q'_1(2) & q'_0(2)
		\end{bmatrix}
		= \det
		\begin{bmatrix}
			q_1(2) & q_0(2)\\
			q'_1(2) - q_1(2) & q'_0(2) - q_0(2)
		\end{bmatrix}
		\\
		&= \det
		\begin{bmatrix}
			q_1(2) & q_0(2)\\
			-2(1-w)\epsilon & -2w\epsilon
		\end{bmatrix}
		= -2\epsilon\det
		\begin{bmatrix}
			p_1(2) & p_0(2)\\
			1-w & w
		\end{bmatrix}
		\\
		&= 2\epsilon\{ (1-w)p_0(2) - w p_1(2) \},
	\end{align*}
	whence $(1-w)p_0(2) - w p_1(2) = 0$. 
	Replacing the above element $2$ with $y=3,\ldots,r_1$, 
	we can show the equation $(1-w)p_0(y) - w p_1(y) = 0$ in the same way.
	\par
	\noindent\textbf{Step~\num.}
	We show \eqref{eq:opt-s2'}. 
	Assume $r_1=1$. In this case, note that \eqref{eq:opt-s2'} contains \eqref{eq:opt-s1'}. 
	In the same way as Step~3 in the proof of Theorem~\ref{thm:subl}, 
	the relations \eqref{eq:min} and \eqref{eq:subl2} hold. 
	Then \eqref{eq:fisher1} must be the equality 
	because the value $J_\theta(p_0,p_1)=\Psi(p_0,p_1)$ equals the maximum value $J_\theta(\bar{p}_0,\bar{p}_1)=\Psi(\bar{p}_0,\bar{p}_1)$. 
	Hence the equations 
	\begin{gather*}
		\psi(p_0(1),p_1(1))
		= \psi\Bigl(  p_0(1), \frac{a}{w} \Bigr) + \psi\Bigl( 0, p_1(1)-\frac{a}{w} \Bigr),\\
		\psi\Bigl(  p_0(1), \frac{a}{w} \Bigr)
		= \psi\Bigl( \frac{a}{1-w}, \frac{a}{w} \Bigr) + \psi\Bigl( p_0(1)-\frac{a}{1-w}, 0 \Bigr)
	\end{gather*}
	follow, where $\psi$ equals $f$ in the proof of Lemma~\ref{conv-eq}. 
	Using the equality condition of the convexity inequality of $\psi$ and noting that $\psi$ is sublinear, we obtain 
	\begin{gather*}
		0 = \det
		\begin{bmatrix}
			a/w&p_0(1)\\
			p_1(1)-a/w&0
		\end{bmatrix}
		= -p_0(1)(p_1(1)-a/w),\\
		\begin{split}
			0 &= \det
			\begin{bmatrix}
				a/w&a/(1-w)\\
				0&p_0(1)-a/(1-w)
			\end{bmatrix}
			\\
			&= (a/w)(p_0(1)-a/(1-w)),
		\end{split}
	\end{gather*}
	whence $(1-w)p_0(1)=w p_1(1)=a$.
	\par
	Assume $r_1\ge2$. The result in Step~2 and the sublinearity of $\psi$ imply 
	\begin{align*}
		&\Psi(p_0,p_1)\\
		=& \psi\Bigl( \sum_{y=1}^{r_1} p_0(y), \sum_{y=1}^{r_1} p_1(y) \Bigr)\\
		&+ \Bigl( 1-\sum_{y=1}^{r_1} p_0(y) \Bigr)\psi(1,0)
		+ \Bigl( 1-\sum_{y=1}^{r_1} p_1(y) \Bigr)\psi(0,1).
	\end{align*}
	Hence the equation $(1-w)\sum_{y=1}^{r_1} p_0(y) = w\sum_{y=1}^{r_1} p_1(y) = a$ 
	follows from the same argument in the previous paragraph. (Replace $p_k(1)$ with $\sum_{y=1}^{r_1} p_k(y)$.) 
	Summarizing the two cases $r_1=1$ and $r_1\ge2$, we obtain \eqref{eq:opt-s2'}.
	\par
	\noindent\textbf{Step~\num.}
	Finally, we show the converse part. 
	In this step, we use the same notations $\Psi$ and $\psi$ as Step~3. 
	Take an arbitrary ordered pair $(p_0,p_1)$ satisfying the two conditions in Theorem~\ref{opt-s}. 
	Then the converse part is verified as follows: 
	\begin{align*}
		&\Psi(p_0,p_1)\\
		=& \sum_{p_0(y)p_1(y)>0} \psi(p_0(y),p_1(y))\\
		&+ \sum_{p_0(y)p_1(y)=0} p_0(y)\psi(1,0) + \sum_{p_0(y)p_1(y)=0} p_1(y)\psi(0,1)\\
		\overset{(a)}{=}& \psi\Bigl( \sum_{p_0(y)p_1(y)>0} p_0(y), \sum_{p_0(y)p_1(y)>0} p_1(y) \Bigr)\\
		&+ \Bigl( 1-\sum_{p_0(y)p_1(y)>0} p_0(y) \Bigr)\psi(1,0)\\
		&+ \Bigl( 1-\sum_{p_0(y)p_1(y)>0} p_1(y) \Bigr)\psi(0,1)\\
		\overset{(b)}{=}& \psi\Bigl( \frac{a}{1-w}, \frac{a}{w} \Bigr)
		+ \Bigl( 1-\frac{a}{1-w} \Bigr)\psi(1,0) + \Bigl( 1-\frac{a}{w} \Bigr)\psi(0,1)\\
		=& \Psi(\bar{p}_0,\bar{p}_1),
	\end{align*}
	where \eqref{eq:opt-s1} and \eqref{eq:opt-s2} have been used to obtain $(a)$ and $(b)$, respectively.
\end{proof}

\section{Optimal discrimination}\label{S5}
Next, we assume that the true parameter is known to be either $\theta_1$ or $\theta_2$.
Under this assumption, we need to discriminate the two distributions
$p_{\theta_1}$ and $p_{\theta_2}$.
Then there are two kinds of error probabilities.
One is the error probability $P_{\theta_2\to \theta_1}$ that 
we incorrectly identity the parameter to be $\theta_1$ while the correct parameter is $\theta_2$.
The other is the error probability $P_{\theta_1\to \theta_2}$ that 
we incorrectly identity the parameter to be $\theta_2$ while the correct parameter is $\theta_1$.
When $n$ data $Y_1, \ldots, Y_n$ are observed,
we employ the likelihood ratio test: 
we support $p_{\theta_1}$  
when $\sum_{i=1}^n \ln(p_{\theta_1}(Y_i)/p_{\theta_2}(Y_i))$ is greater than a certain threshold; 
otherwise we support $p_{\theta_2}$.
The likelihood ratio test is known as the optimal method for this discrimination \cite[Chapter~2, Section~3]{Lehmann}. 
In the symmetric setting, we focus on the average of the above two error probabilities: 
$(1/2)P_{\theta_2\to \theta_1} + (1/2)P_{\theta_1\to \theta_2}$.
In this setting, when the threshold is chosen to be zero, 
the likelihood test realizes the minimum average error probability, 
which goes to zero exponentially as $n\to\infty$.
The exponentially decreasing rate is known as the Chernoff bound \cite{Chernoff,Cover} 
\begin{equation*}
	\sup_{-1<s<0} (-s)D_{1+s}(p_{\theta_1}\|p_{\theta_2}),
\end{equation*}
where the relative R\'{e}nyi entropy 
$D_{1+s}(p_{\theta_1}\|p_{\theta_2})$ 
is defined as
\begin{equation*}
	\begin{dcases}
		\frac{1}{s}\ln \sum_{y\in\mathcal{Y}} p_{\theta_1}(y)^{1+s} p_{\theta_2}(y)^{-s} & s\in(-1,0)\cup(0,\infty),\\
		\sum_{y\in\mathcal{Y}} p_{\theta_1}(y) \ln \frac{p_{\theta_1}(y)}{p_{\theta_2}(y)} & s=0.
	\end{dcases}
\end{equation*}
When $s=0$, it is simply called the relative entropy and denoted as $ D(p_{\theta_1}\|p_{\theta_2})$.
To guarantee the privacy for individual data, 
in the same way as Section~\ref{S2},
we impose a constant constraint on the $l^1$-norm
as $\| (1-w)p_0-w p_1 \|_1\le \delta$,
which is equivalent to the condition \eqref{KKO}.
Therefore, the value
\begin{align}
	&\max_{\|(1-w)p_0 - w p_1\|_1\le\delta} \sup_{-1<s<0} (-s)D_{1+s}(p_{\theta_1}\|p_{\theta_2})\nonumber\\
	=& \sup_{-1<s<0} (-s) \max_{\|(1-w)p_0 - w p_1\|_1\le\delta} D_{1+s}(p_{\theta_1}\|p_{\theta_2}) \label{eq:Chernoff}
\end{align}
expresses the maximum exponentially decreasing rate of the average of the two error probabilities 
under the above privacy condition.

We often focus on an asymmetric setting, in which, 
we impose the constant constraint on the error probability $P_{\theta_1\to \theta_2}$
and then minimize the other error probability $P_{\theta_2\to \theta_1}$.
In this case, the minimum value of $P_{\theta_2\to \theta_1}$ goes to zero exponentially.
As known as Stein's lemma \cite[Theorem~12.8.1]{Cover}, 
the maximum exponentially decreasing rate is known to be the relative entropy $D(p_{\theta_1}\|p_{\theta_2})$.
When we impose a constant constraint on the $l^1$-norm as $\| (1-w)p_0-w p_1 \|_1\le \delta$,
the maximum of the above exponent is 
$\max_{\|(1-w)p_0 - w p_1\|_1\le\delta} D(p_{\theta_1}\|p_{\theta_2})$.

As another setting, 
when we impose the exponentially decreasing rate of $P_{\theta_1\to \theta_2}$ to be greater than or equal to $r$,
the maximum exponentially decreasing rate of $P_{\theta_2\to \theta_1}$
is known \cite{Hoeffding} to be 
\begin{equation*}
	\sup_{-1<s<0} \frac{s}{1+s}\{ r-D_{1+s} (p_{\theta_2}\|p_{\theta_1}) \}.
\end{equation*}
This value is called the Hoeffding bound. 
Therefore, when we impose a constant constraint on the $l^1$-norm as $\| (1-w)p_0-w p_1 \|_1\le \delta$,
the maximum of the above exponent is 
\begin{align}
	&\max_{\|(1-w)p_0 - w p_1\|_1\le\delta} \sup_{-1<s<0} \frac{s}{1+s}\{ r-D_{1+s} (p_{\theta_2}\|p_{\theta_1}) \}\nonumber\\
	=&\sup_{-1<s<0} \frac{s}{1+s}\Bigl\{ r-\max_{\|(1-w)p_0 - w p_1\|_1\le\delta} D_{1+s} (p_{\theta_2}\|p_{\theta_1}) \Bigr\} \label{eq:Hoeffding}.
\end{align}

Further, if the exponentially decreasing rate of $P_{\theta_2\to \theta_1}$ 
is greater than the relative entropy $D(p_{\theta_1}\|p_{\theta_2})$,
the other error probability $P_{\theta_1\to \theta_2}$ goes to one \cite{Blahut}.
In this case, we often focus on the exponentially decreasing rate of 
$1-P_{\theta_2\to \theta_1}$, in which, a smaller exponent of this value is better.
When we impose the exponentially decreasing rate of $P_{\theta_1\to \theta_2}$ to be greater than or equal to $r$, 
the minimum exponentially decreasing rate of $1-P_{\theta_2\to \theta_1}$
is known \cite{Han} to be 
\begin{equation*}
	\sup_{s>0} \frac{s}{1+s}\{ r-D_{1+s} (p_{\theta_2}\|p_{\theta_1}) \}.
\end{equation*}
This value is called the Han-Kobayashi bound.
When similar to \eqref{eq:Hoeffding},
we impose a constant constraint on the $l^1$-norm as $\| (1-w)p_0-w p_1 \|_1\le \delta$,
the minimum of the above exponent is 
\begin{align}
 \min_{\|(1-w)p_0 - w p_1\|_1\le\delta} \sup_{s>0} \frac{s}{1+s}\{ r-D_{1+s} (p_{\theta_2}\|p_{\theta_1}) \}.
	\label{eq:Han-Kobayashi}
\end{align}
Since $\min_{\|(1-w)p_0 - w p_1\|_1\le\delta}$ and $\sup_{s>0}$
are optimizations with the opposite direction, further analysis is not so trivial.
	
First, we tackle the above maximization problems except for \eqref{eq:Han-Kobayashi}, which can be solved as a special case of Theorem~\ref{thm:subl}.
To address these problems, we introduce the $f$-divergence \cite{Csiszar} 
\[
D_f(p_{\theta_1}\|p_{\theta_2}) := 
\sum_{y\in\mathcal{Y}} p_{\theta_2}(y)f\Bigl( \frac{p_{\theta_1}(y)}{p_{\theta_2}(y)} \Bigr),
\]
where $f: (0,\infty)\to\mathbb{R}$ is a convex function.
When a convex function $f$ is chosen as 
$f(x) = -x^{1+s}$ for $s\in(-1,0)$ and $f(x) = x^{1+s}$ for $s\in(0,\infty)$,
the relative R\'{e}nyi entropy can be recovered: 
\begin{align}
	&D_{1+s}(p_{\theta_1}\|p_{\theta_2})\nonumber\\
	=&
	\begin{dcases}
		\frac{1}{s}\ln\{ -D_f(p_{\theta_1}\|p_{\theta_2}) \} & s\in(-1,0),\\
		\frac{1}{s}\ln D_f(p_{\theta_1}\|p_{\theta_2}) & s\in(0,\infty).
	\end{dcases}\label{eq:Renyi}
\end{align}
Moreover, when $f(x)=x\ln x$, 
the $f$-divergence $D_f(p_{\theta_1}\|p_{\theta_2})$ equals the relative entropy $D(p_{\theta_1}\|p_{\theta_2})$.

To apply Theorem~\ref{thm:subl}, we define the function $\psi: [0,\infty)^2\to\mathbb{R}$ as 
\begin{align}
	&\psi(x,y)\nonumber\\
	=&
	\begin{dcases}
		((1-\theta_2)x + \theta_2 y)f\Bigl( \frac{(1-\theta_1)x + \theta_1 y}{(1-\theta_2)x + \theta_2 y} \Bigr) & x+y>0,\\
		0 & x=y=0.
	\end{dcases}\label{LPF}
\end{align}
The convexity of $f$ implies that the function $(x,y)\mapsto y f(x/y)$ is also convex \cite[Section~3.2.6]{Boyd}. 
In addition, the function $\psi$ satisfies the first condition in Definition~\ref{dfn:subl}, $\psi$ is sublinear. 
Under the definition \eqref{LPF}, 
the function $\Psi$ defined in \eqref{LPR} equals the $f$-divergence $D_f(p_{\theta_1}\|p_{\theta_2})$. 
Hence Theorem~\ref{thm:subl} implies the following theorem.

\begin{theorem}\label{f-div}
	When $f$ is a convex function and the conditions $|\mathcal{Y}|\ge3$, $w\in[a,1-a]$, and $\delta,\theta_1,\theta_2\in(0,1)$ hold, 
	we have 
	\begin{align*}
		&\max_{\|(1-w)p_0 - w p_1\|_1\le\delta} D_f(p_{\theta_1}\|p_{\theta_2})\\
		=& a\Bigl( \frac{1-\theta_2}{1-w} + \frac{\theta_2}{w} \Bigr)
		f\Bigl( \frac{(1-\theta_1)w + \theta_1 (1-w)}{(1-\theta_2)w + \theta_2 (1-w)} \Bigr)\\
		&+ \Bigl( 1-\frac{a}{1-w} \Bigr)(1-\theta_2)f\Bigl( \frac{1-\theta_1}{1-\theta_2} \Bigr)\\
		&+ \Bigl( 1-\frac{a}{w} \Bigr)(1-\theta_2)f\Bigl( \frac{\theta_1}{\theta_2} \Bigr),
	\end{align*}
	which is achieved by the ordered pair $(\bar{p}_0,\bar{p}_1)$ given in Theorem~\ref{thm:subl}.
\end{theorem}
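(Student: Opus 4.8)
The plan is to read off Theorem~\ref{f-div} as a direct specialization of Theorem~\ref{thm:subl}, using exactly the sublinear function $\psi$ introduced in \eqref{LPF}, and then to evaluate the resulting functional at the optimal pair. The discussion immediately preceding the theorem already records that $\psi$ is sublinear: it is the perspective-type map $(u,v)\mapsto vf(u/v)$, convex by \cite[Section~3.2.6]{Boyd}, composed with the nonnegative linear map $(x,y)\mapsto((1-\theta_1)x+\theta_1 y,\ (1-\theta_2)x+\theta_2 y)$, and it is positively homogeneous of degree one. What I would make explicit is that the functional $\Psi$ attached to this $\psi$ via \eqref{LPR} is literally the $f$-divergence: since $p_{\theta_i}(y)=(1-\theta_i)p_0(y)+\theta_i p_1(y)$ and $\theta_1,\theta_2\in(0,1)$, for every $y$ with $p_0(y)+p_1(y)>0$ both $(1-\theta_2)p_0(y)+\theta_2 p_1(y)=p_{\theta_2}(y)$ and $(1-\theta_1)p_0(y)+\theta_1 p_1(y)=p_{\theta_1}(y)$ are strictly positive, so $\psi(p_0(y),p_1(y))=p_{\theta_2}(y)f(p_{\theta_1}(y)/p_{\theta_2}(y))$, while for $p_0(y)=p_1(y)=0$ both $\psi$ and the corresponding summand of $D_f$ vanish. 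Summing over $\mathcal{Y}$ gives $\Psi(p_0,p_1)=D_f(p_{\theta_1}\|p_{\theta_2})$. With $|\mathcal{Y}|\ge3$, $w\in[a,1-a]$, $\delta\in(0,1)$, Theorem~\ref{thm:subl} then applies verbatim and states that the constrained maximum is attained at the pair $(\bar p_0,\bar p_1)$.

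The only remaining work is to compute $\Psi(\bar p_0,\bar p_1)=D_f(p_{\theta_1}\|p_{\theta_2})$ for the three-point pair $\bar p_0=[a/(1-w),\ 1-a/(1-w),\ 0]$, $\bar p_1=[a/w,\ 0,\ 1-a/w]$. I would evaluate $p_{\theta_1}$ and $p_{\theta_2}$ at each of the three coordinates: at the first, $p_{\theta_i}(1)=a\left(\frac{1-\theta_i}{1-w}+\frac{\theta_i}{w}\right)$, hence $p_{\theta_2}(1)=a\left(\frac{1-\theta_2}{1-w}+\frac{\theta_2}{w}\right)$ and $p_{\theta_1}(1)/p_{\theta_2}(1)=\frac{(1-\theta_1)w+\theta_1(1-w)}{(1-\theta_2)w+\theta_2(1-w)}$; at the second, $p_{\theta_i}(2)=(1-\theta_i)\left(1-\frac{a}{1-w}\right)$, so the argument of $f$ collapses to $\frac{1-\theta_1}{1-\theta_2}$; at the third, $p_{\theta_i}(3)=\theta_i\left(1-\frac{a}{w}\right)$, so the argument collapses to $\frac{\theta_1}{\theta_2}$. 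Substituting into $D_f(p_{\theta_1}\|p_{\theta_2})=\sum_y p_{\theta_2}(y)f(p_{\theta_1}(y)/p_{\theta_2}(y))$ and collecting the three terms produces the displayed expression.

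I do not anticipate a genuine obstacle: once the identification $\Psi=D_f$ is made, optimality is handed to us by Theorem~\ref{thm:subl} and the value is a finite substitution. The one point that deserves a line of care is the well-definedness of $\psi$ on the boundary of $[0,\infty)^2$: because $\theta_1\in(0,1)$, the quantity $(1-\theta_1)x+\theta_1 y$ vanishes only at $(x,y)=(0,0)$, so for every other $(x,y)$ the argument of $f$ lies in $(0,\infty)$ and $\psi(x,y)\in\mathbb{R}$, while $\psi(0,0):=0$ by convention matches the usual rule that a zero summand contributes nothing to $D_f$; hence no extension of $f$ beyond its domain $(0,\infty)$ is ever required, and the hypothesis ``$f$ convex'' alone suffices.
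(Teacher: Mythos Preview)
Your proposal is correct and follows exactly the paper's own route: the paper simply records that the $\psi$ of \eqref{LPF} is sublinear (perspective of a convex function, positively homogeneous), identifies $\Psi$ with $D_f(p_{\theta_1}\|p_{\theta_2})$, and invokes Theorem~\ref{thm:subl}; your write-up does the same with additional care about boundary cases and an explicit three-term evaluation at $(\bar p_0,\bar p_1)$. One incidental remark: your computation of the third summand gives $p_{\theta_2}(3)=\theta_2(1-a/w)$, so the last displayed term should carry a factor $\theta_2$ rather than $1-\theta_2$; this is a typo in the stated formula, not a defect in your argument.
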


The relative R\'{e}nyi entropy $D_{1+s}(p_{\theta_1}\|p_{\theta_2})$ with $s\in(-1,0)\cup(0,\infty)$ 
is the composite function with a monotone function and the $f$-divergence (see \eqref{eq:Renyi}). 
Further, the relative R\'{e}nyi entropy $D_{1+s}(p_{\theta_1}\|p_{\theta_2})$ with $s=0$ is just the $f$-divergence (when $f(x)=x\ln x$). 
Thus we have the following corollary.

\begin{corollary}\label{Renyi}
	When the conditions $|\mathcal{Y}|\ge3$, $w\in[a,1-a]$, and $\delta,\theta_1,\theta_2\in(0,1)$ hold, 
	we have 
	\begin{align*}
		&\max_{\|(1-w)p_0 - w p_1\|_1\le\delta} D_{1+s}(p_{\theta_1}\|p_{\theta_2})\\
		=& 
		\begin{dcases}
			\frac{1}{s}\ln\Bigl[ 
			\frac{a}{w(1-w)} \frac{\{ (1-\theta_1)w + \theta_1 (1-w) \}^{1+s}}{\{ (1-\theta_2)w + \theta_2 (1-w) \}^{s}}\\
			\quad+ \Bigl( 1-\frac{a}{1-w} \Bigr)\frac{(1-\theta_1)^{1+s}}{(1-\theta_2)^{s}}
			+ \Bigl( 1-\frac{a}{w} \Bigr)\frac{\theta_1^{1+s}}{\theta_2^{s}}
			\Bigr]\\
			\hspace{5cm} s\in(-1,0)\cup(0,\infty),\\
			\frac{a}{w(1-w)} \{ (1-\theta_1)w + \theta_1 (1-w) \}\\
			\qquad\cdot\ln\frac{(1-\theta_1)w + \theta_1 (1-w)}{(1-\theta_2)w + \theta_2 (1-w)}\\
			\quad+ \Bigl( 1-\frac{a}{1-w} \Bigr)(1-\theta_1)\ln\frac{1-\theta_1}{1-\theta_2}
			+ \Bigl( 1-\frac{a}{w} \Bigr)\theta_1\ln\frac{\theta_1}{\theta_2}\\
			\hspace{5cm} s=0,
		\end{dcases}
	\end{align*}
	which is achieved by the ordered pair $(\bar{p}_0,\bar{p}_1)$ given in Theorem~\ref{thm:subl}.
\end{corollary}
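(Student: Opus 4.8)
The plan is to deduce Corollary~\ref{Renyi} directly from Theorem~\ref{f-div}, by choosing the convex function $f$ that recovers the relative R\'{e}nyi entropy via \eqref{eq:Renyi} and then transporting the maximum across the monotone outer transformation.

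First I would fix $s$ and pick $f$ as in \eqref{eq:Renyi}: take $f(x)=x^{1+s}$ for $s\in(0,\infty)$, $f(x)=-x^{1+s}$ for $s\in(-1,0)$, and $f(x)=x\ln x$ for $s=0$. Each of these is convex (for $s\in(-1,0)$ the exponent $1+s$ lies in $(0,1)$, so $x^{1+s}$ is concave and $-x^{1+s}$ convex), so Theorem~\ref{f-div} applies and computes $\max D_f(p_{\theta_1}\|p_{\theta_2})$ over the $l^1$-norm constraint, attained at $(\bar{p}_0,\bar{p}_1)$.

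Next, for $s\ne0$ I would note that, by \eqref{eq:Renyi}, $D_{1+s}(p_{\theta_1}\|p_{\theta_2})$ is the composite of $D_f(p_{\theta_1}\|p_{\theta_2})$ with the map $t\mapsto\frac{1}{s}\ln t$ when $s>0$ and with $t\mapsto\frac{1}{s}\ln(-t)$ when $s\in(-1,0)$. On the feasible set the sign of $D_f$ is constant ($D_f>0$ when $s>0$ since $f\ge0$; $D_f<0$ when $s\in(-1,0)$ since $f\le0$), so these transformations are well defined there, and each is strictly increasing: for $s>0$ this is immediate, and for $s\in(-1,0)$ the factor $1/s<0$ together with the inner sign flip gives a positive derivative. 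Since the constraint set does not depend on $s$, the maximum commutes with the increasing transformation, so $\max D_{1+s}(p_{\theta_1}\|p_{\theta_2})=\frac{1}{s}\ln\bigl(\pm\max D_f(p_{\theta_1}\|p_{\theta_2})\bigr)$ with the sign chosen as above, and the maximizer is still $(\bar{p}_0,\bar{p}_1)$. For $s=0$ there is no outer transformation, since $D_f=D$ when $f(x)=x\ln x$, so Theorem~\ref{f-div} already yields the stated formula.

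Finally I would substitute the chosen $f$ into the closed form of Theorem~\ref{f-div} and simplify. The only nonroutine identity is
\[
\frac{1-\theta_2}{1-w}+\frac{\theta_2}{w}=\frac{(1-\theta_2)w+\theta_2(1-w)}{w(1-w)},
\]
which cancels the prefactor of the first term against the denominator inside $f$, leaving the factor $\frac{a}{w(1-w)}$ and the exponent $-s$ on $(1-\theta_2)w+\theta_2(1-w)$ (or a single logarithm when $s=0$); the other two terms collapse by the elementary cancellations $y\,(x/y)^{1+s}=x^{1+s}y^{-s}$ and $y\,(x/y)\ln(x/y)=x\ln(x/y)$. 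I expect the only real obstacle to be sign bookkeeping: applying the two cases of \eqref{eq:Renyi} consistently so that the maximum may legitimately be pulled inside the logarithm, and keeping track of which factor carries the exponent $1+s$ and which carries $-s$ after the cancellation.
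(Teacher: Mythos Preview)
Your proposal is correct and follows essentially the same route as the paper: the paper also derives Corollary~\ref{Renyi} from Theorem~\ref{f-div} by noting that $D_{1+s}$ is a monotone transformation of an $f$-divergence for $s\ne0$ and is itself an $f$-divergence when $s=0$. Your treatment is in fact more careful than the paper's one-line justification, since you explicitly verify that the outer transformation is \emph{increasing} in each regime (via the sign analysis for $s\in(-1,0)$), which is what is actually needed for the maximum to commute with it.
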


Using the above corollary, we can optimize 
the Chernoff bound \eqref{eq:Chernoff} and the Hoeffding bound \eqref{eq:Hoeffding}
under the constraint $\|(1-w)p_0 - w p_1\|_1\le\delta$, by choosing the two distributions $\bar{p}_0$ and $\bar{p}_1$ given in Theorem~\ref{thm:subl}.

To understand Corollary~\ref{Renyi} visually, we have provided Fig.~\ref{F4}. 
Fig.~\ref{F4} illustrates the relation between the pair $(\theta_1,\theta_2)$ of the two parameters and the maximum value given in Corollary~\ref{Renyi} 
when the threshold $\delta$, the weight $w$, and the parameter $s$ take the specific values.

\begin{figure}[t]
	\centering
	\includegraphics[scale=0.48]{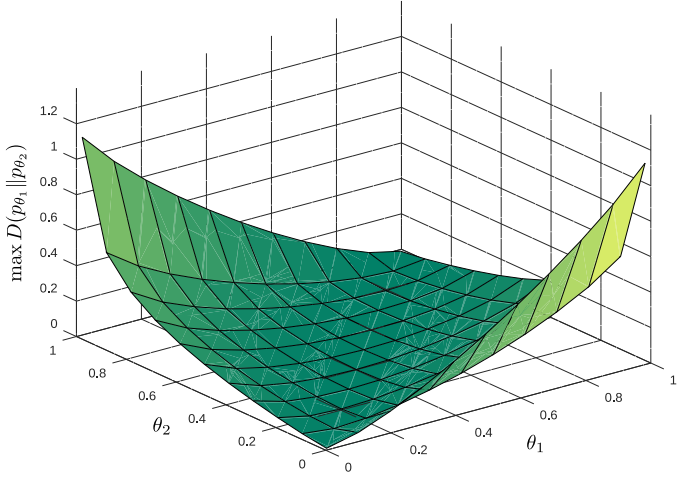}
	\vskip2ex
	\includegraphics[scale=0.48]{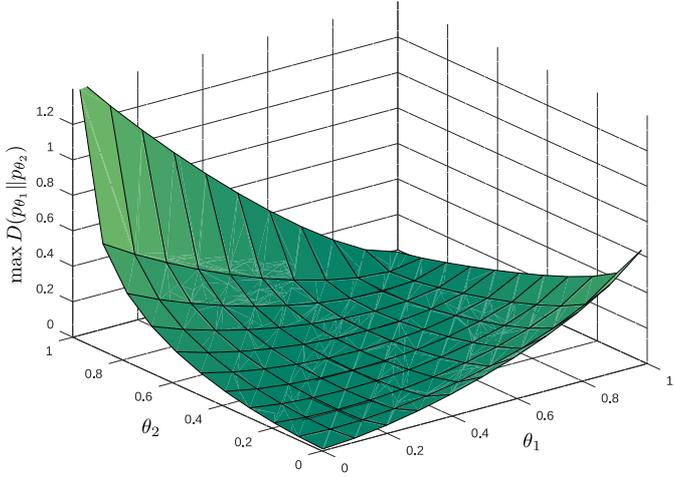}
	\caption{The colored surfaces illustrate the maximum values in Corollary~\ref{Renyi} when $s=0$.
	The upper and lower figures illustrate the cases $(\delta,w)=(1/4,1/2)$ and $(\delta,w)=(1/4,2/5)$, respectively.}
	\label{F4}
\end{figure}

Finally, we address the minimization problem \eqref{eq:Han-Kobayashi}.
Using the pair $(\bar{p}_0,\bar{p}_1)$ given in Corollary~\ref{Renyi},
we have
\begin{align*}
	&\min_{\|(1-w)p_0 - w p_1\|_1\le\delta} \sup_{s>0} \frac{s}{1+s}\{ r-D_{1+s} (p_{\theta_2}\|p_{\theta_1}) \}\\
	\ge& \sup_{s>0} \frac{s}{1+s}\Bigl\{ r - \max_{\|(1-w)p_0 - w p_1\|_1\le\delta} D_{1+s} (p_{\theta_2}\|p_{\theta_1}) \Bigr\}\\
	\overset{(a)}{=}& \sup_{s>0} \frac{s}{1+s}\{ r - D_{1+s}(\bar{p}_{\theta_2}\|\bar{p}_{\theta_1}) \}\\
	\ge& \min_{\|(1-w)p_0 - w p_1\|_1\le\delta} \sup_{s>0} \frac{s}{1+s}\{ r - D_{1+s} (p_{\theta_2}\|p_{\theta_1}) \},
\end{align*}
where $(a)$ follows from Corollary~\ref{Renyi}.
Therefore, the minimum exponent \eqref{eq:Han-Kobayashi} is given by 
\begin{align*}
	&\sup_{s>0} \frac{s}{1+s}\Bigl\{ r - \max_{\|(1-w)p_0 - w p_1\|_1\le\delta} D_{1+s} (p_{\theta_2}\|p_{\theta_1}) \Bigr\}\\
	=& \sup_{s>0} \frac{s}{1+s}\{ r - D_{1+s} (\bar{p}_{\theta_2}\|\bar{p}_{\theta_1}) \}.
\end{align*}
Hence this minimum exponent can be calculated by using Corollary~\ref{Renyi}.

\section{Related work}\label{S6}
Here we compare our result with preceding studies.
The earlier studies \cite{Warner,Greenberg} discussed the estimation error in a similar way, 
but they did not give any privacy criteria.
Warner \cite{Warner} proposed a scheme to protect individual privacy, in 
which each individual flips each true answer with probability $1-\pi$ 
and does not flip it with probability $\pi\in(0,1)$.
That is, he proposed to set $p_0$ and $p_1$ as $p_0 = [\pi,1-\pi]$ and $p_1 = [1-\pi,\pi]$ in our notation.
Greenberg et al.\ \cite{Greenberg} proposed another scheme by using a question unrelated 
to an intended YES/NO question.
In their scheme, the investigator asks each individual both the intended question and the unrelated one.
The asked individual answers the former with probability $\pi\in(0,1)$ and the latter with probability $1-\pi$.
When the unrelated question has the true ratio $\eta:1-\eta$, 
the distributions $p_0$ and $p_1$ are set to
$p_0 = [\pi+(1-\pi)(1-\eta),(1-\pi)\eta]$ and $p_1 = [(1-\pi)(1-\eta),\pi+(1-\pi)\eta]$ in our notation. 
Maximizing the Fisher information in two sets of the above respective pairs $(p_0,p_1)$, 
we obtain the optimal pairs in Table~\ref{T1}.
Our scheme is best of the pairs in Table~\ref{T1} 
because our scheme is to maximize the Fisher information in the set of all pairs of two distributions. 
Moreover, the studies \cite{Warner,Greenberg} did not consider the Fisher information and 
considered only the case $|\mathcal{X}|=|\mathcal{Y}|=2$. 
Hence, even if their schemes are optimized, 
it is impossible to surpass our optimal performance not only the blue broken curves but also the red solid curves in Fig.~\ref{F2}.
This impossibility is illustrated in Fig.~\ref{F3}.

\begin{table}[t]
\caption{Comparison with existing results based on UC-security measure}\label{T1}
This table shows optimal pairs of two distributions $p_0$ and $p_1$ in \cite{Warner}, \cite{Greenberg}, \cite{HLM}, and ours.
\begin{center}
\begin{tabular}{|c||c|c|c|}
	\hline
	Scheme&$|\mathcal{Y}|$&\multicolumn{2}{|c|}{Optimal pair of two distributions}\\
	\hline\hline
	\multirow{2}{*}{Warner \cite{Warner}}&\multirow{2}{*}{$2$}&\multicolumn{2}{|c|}{$p_0 = [(1+\delta)/2,(1-\delta)/2]$}\\
	   &   &\multicolumn{2}{|c|}{$p_1 = [(1-\delta)/2,(1+\delta)/2]$}\\
	\hline
	\multirow{2}{*}{Unrelated question \cite{Greenberg}}&\multirow{2}{*}{$2$}&\multicolumn{2}{|c|}{$p_0 = [\delta+(1-\delta)(1-\eta),(1-\delta)\eta]$}\\
	   &   &\multicolumn{2}{|c|}{$p_1 = [(1-\delta)(1-\eta),\delta+(1-\delta)\eta]$}\\
	\hline
	\multirow{3}{*}{Holohan et al.\ \cite{HLM}}&\multirow{3}{*}{$2$}&$\theta\le1/2$&$\theta>1/2$\\
	\cline{3-4}
	   &   &$p_0 = [1,0]$&$p_0 = [1-\delta,\delta]$\\
	   &   &$p_1 = [1-\delta,\delta]$&$p_1 = [1,0]$\\
	\hline
	\multirow{2}{*}{This paper}&\multirow{2}{*}{$3$}&\multicolumn{2}{|c|}{$p_0 = [2a, 1-2a, 0] = [1-\delta, \delta, 0]$}\\
	   &   &\multicolumn{2}{|c|}{$p_1 = [2a, 0, 1-2a] = [1-\delta, 0, \delta]$}\\
	\hline
\end{tabular}
\end{center}
\end{table}

\begin{figure}[t]
	\centering
	\includegraphics[scale=0.5]{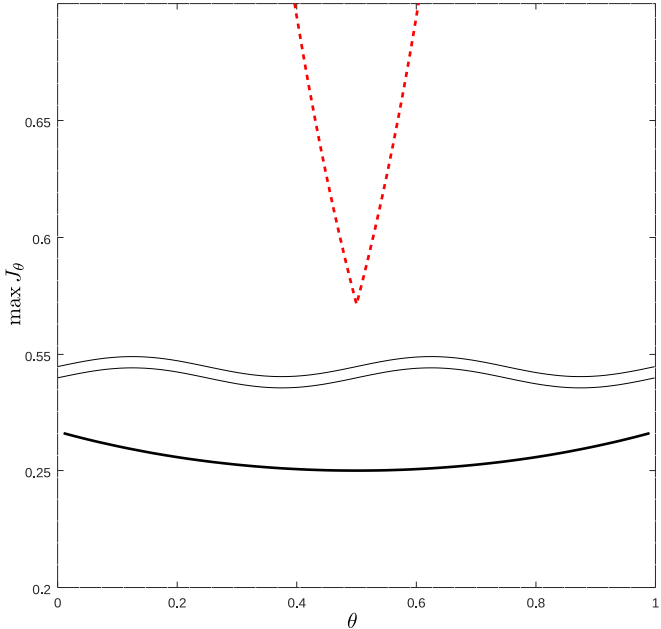}
	\caption{$(\delta,w)=(1/4,1/2)$. 
	The red broken curve illustrates the maximum value in Theorem~\ref{c-var} when $|\mathcal{Y}|=2$, 
	which is also the Fisher information for the third pair in Table~\ref{T1}. 
	The black solid curve illustrates the Fisher information for the first pair in Table~\ref{T1}. 
	The black solid curve also illustrates the Fisher information for the second pair with $\eta=1/2$ in Table~\ref{T1}.}
	\label{F3}
\end{figure}

As a privacy measure, many studies adopt \textit{differential privacy}. 
More precisely, \textit{$(\epsilon,\delta)$-differential privacy} is usually used. 
In our setting, it is defined as \eqref{DP}. 
On the one hand, $(\epsilon,0)$-differential privacy evaluates the maximum of the correct guessing probability 
of a private data $X_i$ when knowing a disclosed data $Y_i$. 
On the other hand, $(0,\delta)$-differential privacy evaluates the average of the correct guessing probability. 
The relation between $(0,\delta)$-differential privacy and the average error probability 
has been already explained in Section~\ref{S1}.

The studies \cite{HLM,KOV16, GV16a} are closely related to this paper. 
Holohan et al.\ \cite{HLM} showed optimal $(\epsilon,\delta)$-differentially private mechanisms explicitly in the case $|\mathcal{X}|=2$. 
However, they considered only the case $|\mathcal{Y}|=2$. 
Hence, when $\epsilon=0$, their result \cite[Theorem~3]{HLM} corresponds to the case (i) in Theorem~\ref{c-var}, 
but the case (iii) in Theorem~\ref{c-var} is not known at all. 
Their result in the case $\epsilon=0$ is in Table~\ref{T1} 
and illustrated by the red broken curves in Fig.~\ref{F2} and \ref{F3}. 
Kairouz et al.\ \cite{KOV16} provided a theorem on optimal $(\epsilon,0)$-differentially private mechanisms in the case $|\mathcal{X}|\ge2$. 
Their theorem can be applied to many objective functions including the Fisher information and the $f$-divergence, and 
turns convex optimization problems to linear programs. 
As stated in the previous sections, their objective functions are the same as ours. 
Also, recall that the contents of Section~\ref{S5} can be regarded as a special case of Theorem~\ref{thm:subl}. 
Hence, in the case $|\mathcal{X}|=2$, Theorem~\ref{thm:subl} can be regarded as the $(0,\delta)$-differential privacy version of the result in \cite{KOV16}. 
Table~\ref{T2} in Section~\ref{S1} summarizes the relation among \cite{KOV16}, \cite{HLM} and this paper. 
Moreover, in the case $\mathcal{X}=\mathbb{Z}$, 
Geng and Viswanath \cite{GV16a} discussed minimization of $l^1$ and $l^2$ cost functions under the $(\epsilon,\delta)$-differential privacy constraint 
and gave lower and upper bounds of the minimum values. 
As a special case, when $\epsilon=0$ and $\delta\to0$, their lower and upper bounds are equal asymptotically. 
Their scheme to protect individual privacy is different from ours because 
their scheme is to add uniform noise or discrete Laplacian noise to integers. 
Indeed, our randomization scheme allows that added noise depends on values of $\mathcal{X}$ 
but their scheme does not. 
In this sense, our randomization scheme is more general than theirs.

There are other related works on optimization under the differential privacy constraint. 
For instance, Duchi et al.\ \cite{DJW} evaluated the infimums of minimax-type objective functions under the $(\epsilon,0)$-differential privacy constraint. 
They did not minimize those objective functions but provided sharp lower and upper bounds up to constant factors. 
The studies \cite{GKOV,GV16b} focused on staircase mechanisms which are mixtures of a finite number of uniform distributions, in the continuous data case. 
Geng et al.\ \cite{GKOV} optimized the expectation of the $l^1$-norm of a disclosed data $Y\in\mathbb{R}^d$ under the $(\epsilon,0)$-differential privacy constraint, and 
showed that a staircase mechanism is optimal when $d=1,2$. 
Similarly, Geng and Viswanath \cite{GV16b} optimized the expectations of symmetric and increasing cost functions of a disclosed data $Y\in\mathbb{R}^d$ under the $(\epsilon,0)$-differential privacy constraint, and 
showed that staircase mechanisms are optimal when $d=1$. 
Also, there is a work which focused on the relation between mutual information and differential privacy \cite{CY}. 

As other privacy measures, 
Issa and Wagner \cite{IW} considered the decreasing exponent rate of the probability that an eavesdropper exactly estimate a source sequence. 
Their privacy measure is asymptotic, while our privacy measure is not so. 
Agrawal and Aggarwal \cite{AA} focused on only the continuous data case and 
adopted another privacy measure, which is given as entropy. 
They defined information loss as the expectation of the $L^1$-metric between the true distribution and its estimate, and 
discussed the trade-off relation between information loss and privacy. 
Also, they proposed an expectation-maximization algorithm as a method to estimate statistical information.

\section{Concluding remarks} \label{S7}
In conclusion, for the trade-off problem associated with binary private data, 
we have proposed a randomized mechanism that can maximize the estimation accuracy of a global property 
while keeping individual privacy a given level. 
Since we assume that private data are used for another cryptographic protocol like authentication, 
the UC-security measure is suitable for a privacy criterion. 
In our setting, the UC-security measure is the $l^1$-norm $(1/2)\|p_0-p_1\|_1$ and 
the constraint $(1/2)\|p_0-p_1\|_1\le\delta$ can be regarded as $(0,\delta)$-differential privacy. 
Under this constraint, we have maximized the Fisher information that is the estimation accuracy of a global property. 
In particular, to achieve the maximum value, 
the set of all randomized data must consist of at least three elements: $|\mathcal{Y}|\ge3$. 
This fact is new and different from the $(\epsilon,0)$-differential privacy case 
because the $(\epsilon,0)$-differential privacy case is maximized even when $|\mathcal{X}|=|\mathcal{Y}|$ \cite{KOV16}. 
We have also extended our analysis to the maximization of the Chernoff bound, which expresses 
the optimal exponentially decreasing rate of discrimination. 
These results may have practical applications in scenarios like voting and surveying.

Our optimal distributions in Theorem~\ref{thm:subl} do not depend on objective functions; 
however, in the case $|\mathcal{X}|\ge3$, optimal mechanisms probably depend on objective functions. 
Hence it is difficult to give optimal mechanisms explicitly in the case $|\mathcal{X}|\ge3$. 
(For example, when the Fisher information $J_\theta$ is an objective function, it depends on the true parameter $\theta$. 
Thus optimal mechanisms are possible to depend on $\theta$.) 
This fact is a main trouble when extending our result to the case $|\mathcal{X}|\ge3$. 
Moreover, to optimize objective functions under the $l^1$-norm constraint, 
we must consider the general case including $|\mathcal{X}|\not=|\mathcal{Y}|$, 
which is also a main trouble.

\balance

\end{document}